\newlength{\depthofsumsign}
\newlength{\totalheightofsumsign}
\newlength{\heightanddepthofargument}
\newcommand*{\DivideLengths}[2]{%
  \strip@pt\dimexpr\number\numexpr\number\dimexpr#1\relax*65536/\number\dimexpr#2\relax\relax sp\relax
}
\newcommand{\N}{\mathbb{N}}
\newcommand{\Z}{\mathbb{Z}}
\newcommand{\R}{\mathbb{R}}
\newcommand{\C}{\mathbb{C}}
\newcommand{\D}{\mathbb{D}}
\newcommand{\T}{\mathbb{T}}
\newcommand{\A}{\mathcal{A}}
\newcommand{\B}{\mathcal{B}}
\newcommand{\norme}[1]{\left\Vert #1\right\Vert}
\newcommand{\normeinf}[1]{\norme{#1}_{\infty}}
\newcommand{\Dbar}{\overline{\D}}
\newcommand{\Hinf}[1]{H^{\infty}(#1\D)}
\newcommand{\dbfc}[1]{${#1}$-derivative bounded functional calculus}
\newcommand{\gdbfc}[1]{${#1}$-derivative $\gamma$-bounded functional calculus}
\theoremstyle{plain}
\newtheorem{thm}{Theorem}[section]
\newtheorem{prop}[thm]{Proposition}
\newtheorem{lem}[thm]{Lemma}
\newtheorem{cor}[thm]{Corollary}
\theoremstyle{definition}
\newtheorem{df}[thm]{Definition}
\theoremstyle{remark}
\newtheorem{rq1}[thm]{Remark}
\newtheorem{rqs}[thm]{Remarks}
\newtheorem{ex}[thm]{Example}
\numberwithin{equation}{section} 
\theoremstyle{break} 
\theoremstyle{nonumberplain}
\subjclass[2010]{Primary REQUIRED; Secondary OPTIONAL}
\title[Derivative bounded functional calculus of power bounded operators]{Derivative bounded functional calculus of power bounded operators on Banach spaces}
\subjclass[2010]{47A60, 46B28, 42B35}
\keywords{$\gamma$-boundedness, power bounded operators, functional calculus, Besov spaces}
\author[L.ARNOLD]{LORIS ARNOLD} 
\address{
	LABORATOIRE DE MATHÉMATIQUES DE BESANÇON, UMR 6623, CNRS \\ 
	UNIVERSITÉ DE FRANCHE-COMTÉ  \\ 
	25030 BESANÇON CEDEX\\
	FRANCE}
\email{loris.arnold@univ-fcomte.fr}
\thanks{This work is supported by the French ``Investissements d'Avenir" program, project ISITE-BFC (contract ANR-15-IDEX-03).} 
\begin{document}

{\begin{flushleft}\baselineskip9pt\scriptsize

\end{flushleft}}
\vspace{18mm} \setcounter{page}{1} \thispagestyle{empty}

\begin{abstract}
In this article we study bounded operators $T$ on Banach space $X$ which satisfy the discrete Gomilko Shi-Feng condition 	
	\begin{equation*}
\int_{0}^{2\pi}|\langle R(re^{it},T)^{2}x,x^*\rangle |dt \leq \frac{C}{(r^2-1)}\norme{x}\norme{x^*},\quad r>1, x\in X, x^* \in X^*.
\end{equation*}	
We show that it is equivalent to a certain derivative bounded functional calculus and also to a
 bounded functional calculus relative to Besov space. Also on Hilbert space discrete Gomilko Shi-Feng
  condition is equivalent to power-boundedness. Finally we discuss the last equivalence on general Banach space involving the concept of $\gamma$-boundedness. 
   
\end{abstract}
\maketitle

\section{Introduction}
Let $X$ be a Banach space, a bounded operator $T :X \rightarrow X$ is called
 power-bounded when $\underset{n \in \N}\sup \norme{T^n} < \infty$. In this
  case the spectrum of $T$ is contained in the closed unit ball $\Dbar$. We say that $T$ is polynomially bounded when it satisfies an estimate
\begin{equation}\label{polybounded}
\norme{P(T)} \leq C \sup\{ |P(z)|: z\in \D \}, \quad P \text{ polynomial. } 
\end{equation}
 When $X= H$ is an Hilbert space and $T$ is a contraction, that is
   $\norme{T}\leq 1$ (which is a power-bounded operator) it is well-known
    that $T$ satisfies \eqref{polybounded} with $C =1$. This is the so-called
     von Neumann's inequality. However even on Hilbert space we cannot expect
          that any power-bounded operator satisfies \eqref{polybounded} (see \cite{leb}). A natural
           question is whether one can obtain similar estimates, for power-bounded operators,
            replacing the norm uniform in the right-hand side of \eqref{polybounded} by an another function norm. An answer is given by Peller in
             \cite{pel1} for power-bounded operators on Hilbert space. If $T$ is such operator, then
              it satisfies
          \begin{equation}\label{polyboundedbesov}
          \norme{P(T)} \leq C\norme{P}_{\B(\D)} , \quad P \text{ polynomial. } 
          \end{equation}
         where $\norme{\cdot}_{\B(\D)}$ is an appropriate Besov norm (see section 3).

         In the first part of this paper we deal with operators which satisfy the discrete Gomilko
          Shi-Feng condition: an operator $T : X
            \rightarrow X$ satisfies this condition if the spectrum of $T$
             is included in $\Dbar$ and   
             
	\begin{equation}\label{strineprime}
\int_{0}^{2\pi}|\langle R(re^{it},T)^{2}x,x^*\rangle |dt \leq \frac{C}{(r^2-1)}\norme{x}\norme{x^*},\quad r>1, x\in X, x^* \in X^*.
\end{equation}
This condition was first introduced in \cite{gom2} and \cite{shifeng}.The continuous case had been introduced before in \cite{gom} and studied extensively in \cite{bat-haa}, \cite{arn1} and \cite{bgt1}.
We will show that this condition implies power boundedness and that the converse is true when $X =H$
 is an Hilbert space. We will be able to show (see section 3) that a bounded operator $T$ on a general Banach space $X$ satisfies an estimate \eqref{polyboundedbesov} if and only if $T$ satisfies the discrete Gomilko Shi-Feng condition \eqref{strineprime}. 
 Furthermore one of the main results of this paper is to show that the
  discrete Gomilko Shi-Feng condition is equivalent to the boundedness of the set 
  
 	\begin{equation}\label{primefuncalc}
\Big \{ (r-1)\phi'(T): r> 1 \text{ and } \phi \in H^{\infty}(r\D) \text{ with }\sup\{ |\phi(z)|: z\in r\D \} \leq 1 \Big\}.
\end{equation}
This is weaker than \eqref{polybounded} in the sense that if an
 operator $T : X \rightarrow X$ satisfies \eqref{polybounded} then \eqref{primefuncalc} is bounded. The converse is false even on
   Hilbert space.

The boundedness of $\eqref{primefuncalc}$ is a way to characterize power bounded
 operators on Hilbert space or operators with discrete Gomilko Shi-Feng
  condition on general Banach space. However \eqref{primefuncalc} is not a usual
   bounded functional calculus because we have an estimate of $\norme{\phi'(T)}$ and not
    $\norme{\phi(T)}$. \\

The second part of this article (section 4) is devoted to
       `$\gamma$-versions' of the previous results. By a `$\gamma$-version', we mean
        replacing operator norm boundedness by the
         so-called stronger notion of $\gamma$-boundedness (see \cite{hnvw} and references therein).
           This section starts with some basics about
            $\gamma$-boundedness, $\gamma$-operators and $\gamma'$-operators (see \cite{kal-wei1}). 
            We will introduce a $\gamma$-analogue of the discrete Gomilko Shi-Feng condition for a
             bounded operator $T :X \rightarrow X$ with spectrum included in
              $\overline{\D}$ as follows : there exists $C>0$ such that for any 
            $N \in \N$, for any  $r_1, \ldots , r_N > 1$, and for any
            $x_1, \ldots , x_N \in X$ and $x_1^*, \ldots , x_N^* \in X^*$, we have
            \begin{align}\label{gammagfsintro}	 
            &\sum_{k=1}^{N}\int_{\R}|\langle (r_k+1)(r_k-1) R(r_k e^{it},T)^{2}x_k,x_k^* \rangle| dt \\
            &	\leq C\norme{(x_k)_{k \in \N_N}}_{\gamma(\N_N; X)}\norme{(x^*_k)_{k \in \N_N}}_{\gamma'(\N_N; X^*)} \nonumber.
            \end{align}
            We will show that $T$ satisfies this condition if and only if the set in \eqref{primefuncalc} is $\gamma$-bounded. 
            Moreover, we will show that $T$ is power $\gamma$-bounded, that is the set
              $\{T^n: n\in \N \}$ is $\gamma$-bounded, if and only if $T$ satisfies \eqref{gammagfsintro}.  
               It is important to notice that these results are stated without any assumption on $X$.
                We will conclude this paper with a $\gamma$-version of \cite[Corollary 3.7]{pel1}.\\

         Finally we give a few notation to be used along this paper. We write $\N = \{1,2, \ldots \}$ for the natural numbers and for $N\in \N$ we write $\N_N = \{1,2, \ldots, N\}$ the first N natural numbers. For any Banach spaces $X$ and $Y$, we
          denote by $B(X,Y)$ the algebra of all bounded operators from $X$ into $Y$ equipped with the operator norm, and we set $B(X) := B(X,X)$.
           For $T \in B(X)$ we denote by $\sigma(T)$ the spectrum of
            $T$. For $\lambda \in \C \backslash \sigma(T)$, we put
             $R(\lambda, T) = (\lambda I_X - T)^{-1}$ the resolvent operator. We let $\mathbb{D}$ and $\Dbar$ respectively
             the open and closed unit disk. Also
              the open disk of radius $r$ centered at $0$ and  will be denote
               by $r\mathbb{D}$ while the closed disk of radius $r$ centered
                at $0$ and will be denoted by
                 $r \Dbar$. Also. For any $r\in \R$ we denote
                   by $\Hinf{r}$ the space of all bounded analytic functions
                     $\phi : r\D \rightarrow \C$. This is a Banach space for
                      the norm
         \[
         \norme{\phi}_{\Hinf{r}} := \sup\{|\phi(z)|: z\in r\D \}.
         \]

\section{Discrete Gomilko Shi-Feng condition and derivative functional calculus.}

\subsection{Discrete Gomilko Shi-Feng condition}
	Let $T \in B(X)$ with $\sigma(T) \subset \Dbar$. For $r >1$, $\phi \in \Hinf{r}$ and $m\in \N$, we denote by $\phi^{(m)}(T)$ the operator obtained with Riesz-Dunford calculus. Since $\phi \in \Hinf{r}$ then for each
	  $1<\rho < r$, $\phi^{(m)} \in \Hinf{\rho}$ and therefore the Riesz-Dunford calculus can be applied to the function $\phi^{(m)}$ and the operator $T$. We obtain
	\begin{equation}\label{RDcalculus}
	\phi^{(m)}(T) = \frac{1}{2\pi}\int_{0}^{2\pi}\rho e^{it}\phi^{(m)}(\rho e^{it})R(\rho e^{it},T)dt \in B(X), \quad 1<\rho < r.
	\end{equation}

\begin{lem}
	Let $T \in B(X)$ with $\sigma(T) \subset \Dbar$ and let $m\in \N$. Then for each $r>1$ and $\phi \in \Hinf{r}$, one has
\begin{equation}\label{mIPP}
\phi^{(m)}(T) = \frac{m!}{2\pi}\int_{0}^{2\pi}\rho e^{it}\phi(\rho e^{it})R(\rho e^{it},T)^{m+1}dt, \quad 1<\rho < r.
\end{equation}
\end{lem}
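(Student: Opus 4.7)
The plan is to start from the Riesz--Dunford formula \eqref{RDcalculus} and integrate by parts $m$ times, transferring all derivatives from $\phi^{(m)}$ onto the resolvent $R(\cdot,T)$. Rewriting \eqref{RDcalculus} as the contour integral
\[
\phi^{(m)}(T) = \frac{1}{2\pi i}\int_{|\lambda|=\rho}\phi^{(m)}(\lambda)R(\lambda,T)\,d\lambda,
\]
the integrand is holomorphic in an annular neighborhood of the circle $|\lambda|=\rho$: indeed $\phi$ extends holomorphically to $r\D \supset \rho\Dbar$ since $\rho<r$, and $\lambda \mapsto R(\lambda,T)$ is holomorphic on $\C\setminus\sigma(T)\supset \C\setminus\Dbar$ because $\rho>1$.

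The key computational input is the identity
\[
\frac{d^k}{d\lambda^k}R(\lambda,T) = (-1)^k k!\, R(\lambda,T)^{k+1},\qquad \lambda \in \C\setminus\sigma(T),
\]
obtained by iterating $\frac{d}{d\lambda}R(\lambda,T)=-R(\lambda,T)^2$, which itself follows from the resolvent identity. Parametrizing by $\lambda = \rho e^{it}$ and integrating by parts $m$ times in $t$, all boundary terms vanish by $2\pi$-periodicity. With $k=m$ this yields
\[
\phi^{(m)}(T) = \frac{(-1)^m}{2\pi i}\int_{|\lambda|=\rho}\phi(\lambda)\frac{d^m}{d\lambda^m}R(\lambda,T)\,d\lambda = \frac{m!}{2\pi i}\int_{|\lambda|=\rho}\phi(\lambda)R(\lambda,T)^{m+1}\,d\lambda,
\]
which, after reparametrization as $\lambda=\rho e^{it}$, $d\lambda = i\rho e^{it}\,dt$, is exactly \eqref{mIPP}. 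An equivalent approach is induction on $m$, the base case being \eqref{RDcalculus} and the inductive step a single integration by parts using $\frac{d}{d\lambda}R(\lambda,T)^{m+1} = -(m+1)R(\lambda,T)^{m+2}$.

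The main obstacle, such as it is, is ensuring that integration by parts is legitimate for $B(X)$-valued integrands; this is verified either by pairing both sides with arbitrary $x\in X$ and $x^*\in X^*$ to reduce to the scalar case, or by observing that scalar integration by parts transfers componentwise to any Banach-space-valued smooth integrand. Beyond that, the only care needed is tracking signs and factorials: the $(-1)^m$ produced by $m$ iterations of IBP cancels the $(-1)^m$ arising from the $m$-fold differentiation of the resolvent, leaving the positive prefactor $m!$.
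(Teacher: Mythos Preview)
Your proof is correct and follows essentially the same approach as the paper: both arguments integrate by parts $m$ times to transfer the derivatives from $\phi^{(m)}$ onto the resolvent, using that successive derivatives of $R(\lambda,T)$ produce powers $R(\lambda,T)^{k+1}$ with the appropriate factorial. The only cosmetic difference is that the paper differentiates directly in the real parameter $t$ via $\frac{d}{dt}R(\rho e^{it},T)^n = -in\rho e^{it}R(\rho e^{it},T)^{n+1}$, whereas you phrase the same computation in the contour variable $\lambda$ before parametrizing.
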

\begin{proof}
	 Let $r>1$, $\phi \in \Hinf{r}$ and $1 < \rho < r$. 
	For each $n \in \N$,
	\[
	\frac{d}{dt}R(\rho e^{it},T)^n = -in\rho e^{it}R(\rho e^{it},T)^{n+1}.
	\]
	Using \eqref{RDcalculus} it follows by $m$ integrations by parts,
	\begin{align*}
	\phi^{(m)}(T) &= \frac{1}{2\pi}\int_{0}^{2\pi}\rho e^{it}\phi^{(m-1)}(\rho e^{it})R(\rho e^{it},T)^2dt\\
	&  = \ldots = \frac{m!}{2\pi}\int_{0}^{2\pi}\rho e^{it}\phi(\rho e^{it})R(\rho e^{it},T)^{m+1}dt.
	\end{align*}
\end{proof}	

We now investigate the discrete Gomilko Shi-Feng condition which appears in \cite{gom2} in the case $m = 1$. The case $m >1$ is inspired by \cite[Proposition 6.3.]{bat-haa}.
\begin{df}\label{dfGFS}
Let $m \geq 1$ be an integer. We say that $T \in B(X)$ has the property $(GFS)_{m}$ if $\sigma(T) \subset \Dbar$ and there exists $C>0$ such that 
	\begin{equation}\label{strinem}
\int_{0}^{2\pi}|\langle R(re^{it},T)^{m+1}x,x^*\rangle |dt \leq \frac{C}{(r+1)(r-1)^m}\norme{x}\norme{x^*},\quad r>1, x\in X, x^* \in X^*.
\end{equation}
\end{df}	

	\begin{prop}\label{gfsimppowbounded}
	Let $T$ with $(GFS)_m$. Then $T$ is power bounded.
	\end{prop}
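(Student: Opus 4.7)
The plan is to apply the integral representation \eqref{mIPP} from the preceding lemma with the polynomial $\phi(z)=z^n$, and then to exploit the $(GFS)_m$ bound on the integral with a well-chosen $\rho$.

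For $n\geq m$, one has $\phi^{(m)}(z) = \frac{n!}{(n-m)!} z^{n-m}$, so by \eqref{mIPP}, for every $\rho>1$,
\[
\frac{n!}{(n-m)!}\,T^{n-m} \;=\; \frac{m!\,\rho^{n+1}}{2\pi}\int_{0}^{2\pi} e^{i(n+1)t}\,R(\rho e^{it},T)^{m+1}\,dt.
\]
Pairing against $x\in X$ and $x^*\in X^*$, taking absolute values inside the integral, and invoking \eqref{strinem} yields
\[
|\langle T^{k}x,x^*\rangle| \;\leq\; \frac{m!\,k!}{(k+m)!}\,\frac{C\,\rho^{k+m+1}}{2\pi\,(\rho+1)(\rho-1)^m}\,\norme{x}\norme{x^*},
\]
after substituting $k=n-m\geq 0$.

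The next step is to choose $\rho$ to kill the only two factors that threaten to blow up, namely $\frac{1}{(\rho-1)^m}$ and $\frac{k!}{(k+m)!}$. Writing
\[
\frac{m!\,k!}{(k+m)!} \;=\; \frac{m!}{(k+1)(k+2)\cdots(k+m)} \;\leq\; \frac{m!}{k^{m}}
\]
for $k\geq 1$, and selecting $\rho = 1+\tfrac{1}{k}$, the factors $k^{-m}$ and $(\rho-1)^{-m}=k^{m}$ cancel. Since $(1+\tfrac{1}{k})^{k+m+1}\leq e\cdot 2^{m+1}$ for every $k\geq 1$ and $(\rho+1)\geq 2$, the right-hand side is uniformly bounded, giving $\sup_{k\geq 1}\norme{T^{k}} \leq C'$ for a constant $C'=C'(C,m)$. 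Combining this with the trivial bounds $\norme{T^{k}}\leq\norme{T}^{k}$ for $0\leq k\leq m-1$ (or just $\norme{T^0}=1$ if one handles $k\geq 1$ uniformly as above) concludes that $T$ is power bounded.

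The only delicate step is the scaling argument: one must notice that the binomial prefactor supplies exactly the $k^{-m}$ decay needed to cancel the $(\rho-1)^{-m}$ blow-up when $\rho-1$ is taken of order $1/k$, while keeping $\rho^{n+1}$ bounded. Once this balance is spotted, the rest is purely a matter of bookkeeping.
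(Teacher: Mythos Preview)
Your proof is correct and follows essentially the same route as the paper: the paper takes $\phi(z)=\dfrac{z^{n+m}}{(n+1)\cdots(n+m)}$ so that $\phi^{(m)}(z)=z^n$ directly, applies \eqref{mIPP} and \eqref{strinem}, and then sets $r=1+\tfrac{1}{n}$ to balance the factors exactly as you do. The only difference is the cosmetic relabelling $k=n-m$ and the separate handling of $k=0$, which the paper avoids by its choice of $\phi$.
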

\begin{proof}
 Let $\phi(z) = \frac{z^{n+m}}{(n+1)\ldots(n+m)}$, then $\phi \in H^{\infty}(r\mathbb{D})$ for any $r>1$ with 
\[\norme{\phi}_{H^{\infty}(r\mathbb{D})} = \frac{r^{n+m}}{(n+1)\ldots(n+m)}
\]
 and furthermore $\phi^{(m)}(z) = z^n$. By \eqref{mIPP}, one has for $r>1$,
\[
T^n = \frac{m!}{2\pi}\int_{0}^{2\pi}re^{it}\phi(re^{it})R(re^{it},T)^{m+1}dt.
\]
It follows, for $x \in X$ and $x^* \in X^*$, 
\begin{align}
|\langle T^nx,x^* \rangle| &\leq \frac{rm!}{2\pi} \norme{\phi}_{\Hinf{r}}\int_{0}^{2\pi}|\langle R(re^{it},T)^{m+1}x,x^*\rangle| dt \nonumber \\
&\label{inepowgfs}\leq   \frac{Cr^{n+m+1}m!}{2\pi(n+1)\ldots(n+m)(r+1)(r-1)^m}\norme{x}\norme{x^*}.
\end{align}
Therefore,

\[
\norme{T^n} \leq \frac{m!Cr^{n+m+1}}{2\pi (n+1)\ldots(n+m)(r-1)^m}.
\]
Letting $r = 1+\frac{1}{n}$ one obtains
\[
\frac{m!Cr^{n+m+1}}{2\pi (n+1)\ldots(n+m)(r-1)^m} = \frac{m!Cn^m(1+\frac{1}{n})^{n}(1+\frac{1}{n})^{m+1}}{2\pi (n+1)\ldots(n+m)}.
\]
But for each $n \in \N$ one has 
\[
\frac{n^m}{(n+1)\ldots(n+m)} \leq 1 \quad \text{and}\quad  \bigg(1+\frac{1}{n}\bigg)^{m+1} \leq 2^{m+1}
\]
and it is well-known that 
\[
\sup_{n\in \N}\Big(1+\frac{1}{n}\Big)^{n} = e.
\]
 Finally, for each $n\in \N$,
\[
\norme{T^n} \leq \frac{2^mm!Ce}{\pi}.
\]
\end{proof}

\begin{rq1}
 We recall that $T \in B(X)$ is a Ritt operator if $\sigma(T) \subset \Dbar$ and there exists $C \geq 0$ such that  \[\forall \lambda \in \C \textbackslash {\Dbar}, \quad \norme{R(\lambda, T)} \leq \displaystyle\frac{C}{|\lambda-1|}.
	\]
Ritt operators play a prominent role in the theory of functional calculus. Indeed they have a specific $H^{\infty}$ functional calculus (see \cite{lem4} and references therein).

	It is easy to check that a Ritt operator has $(GFS)_1$. Indeed let $T$ a Ritt operator and $r>1$, let $x\in X$ and $x^* \in X^*$, one has
\begin{align*}
\int_{0}^{2\pi}|\langle R(re^{it},T)^{2}x,x^*\rangle |dt &\leq \int_{0}^{2\pi} \frac{C}{|re^{it}-1|^2}\norme{x}\norme{x^*} dt \\
&  = \int_{0}^{2\pi} \frac{C}{r^2+1-2rcos(t)}\norme{x}\norme{x^*} dt.
\end{align*}
Now by the residue method (see for example \cite[p.99]{car}) 
\begin{equation}\label{residu}
 \int_{0}^{2\pi} \frac{1}{r^2+1-2rcos(t)}dt = \frac{2\pi}{(r+1)(r-1)},
\end{equation}
therefore one has
\[
\int_{0}^{2\pi}|\langle R(re^{it},T)^{2}x,x^*\rangle |dt  \leq \frac{2\pi C}{(r+1)(r-1)}\norme{x}\norme{x^*}.
\] 	
	
\end{rq1}	
We will need the following stability property. We skip the easy proof.
\begin{prop}\label{GFSsimilarity}
	Let $S \in B(X)$ which has $(GFS)_m$. If $T$ is similar to $S$, that is there exists $U \in B(X)$ invertible such that $T = USU^{-1}$, then $T$ has $(GFS)_m$.   
\end{prop}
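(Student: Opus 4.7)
The plan is to exploit two elementary transformation rules under similarity: the spectrum is preserved ($\sigma(T)=\sigma(S)$, so it remains in $\overline{\D}$), and the resolvent transforms as $R(\lambda,T)=UR(\lambda,S)U^{-1}$ for every $\lambda\notin\sigma(S)$. Iterating the latter gives
\[
R(\lambda,T)^{m+1}=UR(\lambda,S)^{m+1}U^{-1},\qquad \lambda\notin\sigma(S).
\]

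I would then transport the duality bracket. For any $x\in X$ and $x^{*}\in X^{*}$, writing $U^{*}$ for the adjoint,
\[
\langle R(re^{it},T)^{m+1}x,x^{*}\rangle = \langle R(re^{it},S)^{m+1}(U^{-1}x),U^{*}x^{*}\rangle.
\]
Applying the $(GFS)_{m}$ hypothesis of $S$ to the vectors $U^{-1}x\in X$ and $U^{*}x^{*}\in X^{*}$ yields
\[
\int_{0}^{2\pi}\bigl|\langle R(re^{it},T)^{m+1}x,x^{*}\rangle\bigr|\,dt \leq \frac{C}{(r+1)(r-1)^{m}}\,\norme{U^{-1}x}\,\norme{U^{*}x^{*}},
\]
and the submultiplicativity bounds $\norme{U^{-1}x}\leq\norme{U^{-1}}\norme{x}$ and $\norme{U^{*}x^{*}}\leq\norme{U}\norme{x^{*}}$ (using $\norme{U^{*}}=\norme{U}$) give the estimate \eqref{strinem} for $T$ with constant $C\norme{U}\norme{U^{-1}}$.

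There is no real obstacle here; the whole argument is a one-line computation once one notices that similarity commutes with integer powers of the resolvent and that the pairing $\langle Uy,x^{*}\rangle = \langle y,U^{*}x^{*}\rangle$ moves $U$ and $U^{-1}$ onto the dual side at the cost of factors $\norme{U}$ and $\norme{U^{-1}}$. This is precisely why the authors declared the proof ``easy'' and omitted it.
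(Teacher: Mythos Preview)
Your argument is correct and is exactly the standard computation the authors had in mind; the paper explicitly omits the proof (``We skip the easy proof''), so there is nothing further to compare.
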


We will see that $T$ has $(GFS)_m$ if and only if $T$ has $(GFS)_1$. Let us begin by one implication. For the reverse implication we will use a derivative functional calculus that we introduce in the next sub-section. 

\begin{prop}\label{propGFSmimpGF1}
	Let $m \geq 1$ an integer and $T\in B(X)$. If $T$ has $(GFS)_m$ then $T$ has $(GFS)_k$ for each $1\leq k \leq m$.
\end{prop}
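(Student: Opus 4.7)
The plan is to prove the statement by downward induction on $k$: show that $(GFS)_m \Rightarrow (GFS)_{m-1}$ for each $m \geq 2$, then iterate. So fix $m\geq 2$ and assume $T$ satisfies $(GFS)_m$.

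The key observation is the differentiation formula
\[
\frac{d}{ds} R(se^{it},T)^m = -m\, e^{it}\, R(se^{it},T)^{m+1},\qquad s>1,\;t\in[0,2\pi].
\]
Since $T$ is bounded, $\|R(\lambda,T)^m\| = O(|\lambda|^{-m})$ as $|\lambda|\to\infty$, so the tail vanishes and we may integrate on the ray $s\mapsto se^{it}$ from $r$ to $\infty$ to obtain the integral representation
\[
R(re^{it},T)^m \;=\; m\int_r^{\infty} e^{it}\,R(se^{it},T)^{m+1}\,ds.
\]

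Applying this to $x,x^*$, taking the modulus, and invoking Fubini yields
\[
\int_0^{2\pi} |\langle R(re^{it},T)^m x,x^*\rangle|\,dt \;\leq\; m\int_r^{\infty}\!\!\int_0^{2\pi} |\langle R(se^{it},T)^{m+1}x,x^*\rangle|\,dt\,ds.
\]
Now I plug in the $(GFS)_m$ hypothesis to bound the inner integral by $\frac{C}{(s+1)(s-1)^m}\|x\|\|x^*\|$. Since $s\geq r$ implies $s+1\geq r+1$, I can factor out $(r+1)^{-1}$ and explicitly compute the remaining one-variable integral
\[
\int_r^{\infty} \frac{ds}{(s-1)^m} \;=\; \frac{1}{(m-1)(r-1)^{m-1}},
\]
which is where the hypothesis $m\geq 2$ is used. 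This gives
\[
\int_0^{2\pi} |\langle R(re^{it},T)^m x,x^*\rangle|\,dt \;\leq\; \frac{mC}{(m-1)(r+1)(r-1)^{m-1}}\,\|x\|\,\|x^*\|,
\]
which is exactly $(GFS)_{m-1}$ with constant $\tfrac{mC}{m-1}$. Iterating $m-k$ times produces $(GFS)_k$ for any $1\leq k\leq m$.

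The only mildly delicate step is justifying the integral representation of $R(re^{it},T)^m$, specifically the vanishing of $R(se^{it},T)^m$ as $s\to\infty$ and the convergence of the improper integral, but both follow immediately from the Neumann series bound for the resolvent outside $\overline{\D}$. The rest is a straightforward Fubini-plus-elementary-estimate argument, and the exponent arithmetic aligns cleanly because the $(GFS)_m$ right-hand side $(r+1)^{-1}(r-1)^{-m}$ is structured precisely so that integrating one factor of $(s-1)^{-1}$ in $s$ drops the exponent by one while keeping the $(r+1)^{-1}$ factor intact.
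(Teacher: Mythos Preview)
Your proof is correct and follows essentially the same route as the paper's: downward induction via the identity $R(re^{it},T)^m = m\int_r^\infty e^{it}R(se^{it},T)^{m+1}\,ds$, then Fubini, the $(GFS)_m$ bound on the inner integral, the estimate $(s+1)^{-1}\le (r+1)^{-1}$, and the explicit computation of $\int_r^\infty (s-1)^{-m}\,ds$. The only cosmetic difference is that the paper phrases the induction with an explicit running constant $Cm/k$, whereas you do one step and iterate.
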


\begin{proof} 
	Let $r>1$ and assume \eqref{strinem}. We show by downward induction that for $1\leq k \leq m$,
	 \begin{equation}\label{inedownind}
	 \int_{0}^{2\pi} |\langle R(re^{it},T)^{k+1}x,x^*\rangle |dt \leq \frac{Cm}{k(r+1)(r-1)^k}\norme{x}\norme{x^*},\quad x\in X, x^* \in X^*.
	 \end{equation}	 
Let $x \in X$, $x^* \in X^*$. Suppose that \eqref{inedownind} is satisfied for $k$ with $1 < k +1 \leq m$. Using the fact that $\frac{d}{dr}R(re^{it},T)^n = -ne^{it}R(re^{it},T)^{n+1}$ for each $n \in \N$ and Fubini's Theorem, one has

	\begin{align}
\int_{0}^{2\pi}\langle R(re^{it},T)^{k}x,x^*\rangle dt &= \int_{0}^{2\pi}\int_{r}^{+\infty}\langle ke^{it} R(ue^{it},T)^{k+1}x,x^*\rangle dudt \nonumber \\
&= \label{eqpropmimp1} k\int_{r}^{+\infty}\int_{0}^{2\pi}e^{it}\langle R(ue^{it},T)^{k+1}x,x^*\rangle dtdu. 
\end{align}
It follows using \eqref{inedownind}, 
	\begin{align*}
	\int_{0}^{2\pi}|\langle R(re^{it},T)^{k}x,x^*\rangle |dt 
	& \leq k\int_{r}^{+\infty}\int_{0}^{2\pi}|\langle R(ue^{it},T)^{k+1}x,x^*\rangle| dtdu \\
	&\leq k\int_{r}^{+\infty}\frac{Cm}{k(u+1)(u-1)^{k}}\norme{x}\norme{x^*}du\\
	&  \leq  \int_{r}^{+\infty}\frac{Cm}{(r+1)(u-1)^{k}}\norme{x}\norme{x^*}du \\
	&= \frac{Cm}{(k-1)(r+1)(r-1)^{k-1}}\norme{x}\norme{x^*}.
	\end{align*}
\end{proof}

\subsection{\dbfc{m}}
We begin by giving a basic result which is a straighforward consequence of Cauchy inequalities. 

\begin{lem}\label{lemcauchyine}
	Let $r > 0$ and $\phi \in H^{\infty}(r\D)$. Then for each $\rho < r$, $\phi^{(m)} \in H^{\infty}(\rho\D)$ with
	\begin{equation}\label{inecau}
	\norme{\phi^{(m)}}_{H^{\infty}(\rho\D)} \leq \frac{m!}{(r-\rho)^m}\norme{\phi}_{H^{\infty}(r\mathbb{D})}.
	\end{equation}
\end{lem}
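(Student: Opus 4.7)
The plan is to obtain the bound pointwise on $\rho\D$ via the Cauchy integral formula for higher derivatives and then take the supremum. This is a classical Cauchy estimate: the novelty here is merely tracking the dependence on $r$ and $\rho$ carefully.

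First I would fix $z \in \rho\D$ and $0 < s < r - |z|$. Since the closed disk $\{w : |w-z| \leq s\}$ lies in $r\D$ and $\phi$ is analytic on $r\D$, the Cauchy integral formula gives
\[
\phi^{(m)}(z) = \frac{m!}{2\pi i}\int_{|w-z|=s}\frac{\phi(w)}{(w-z)^{m+1}}\,dw.
\]
Estimating the integrand trivially by $\norme{\phi}_{H^{\infty}(r\D)}/s^{m+1}$ and multiplying by the length $2\pi s$ of the contour yields
\[
|\phi^{(m)}(z)| \leq \frac{m!}{s^m}\norme{\phi}_{H^{\infty}(r\D)}.
\]

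Next I would let $s \to (r - |z|)^-$ to obtain $|\phi^{(m)}(z)| \leq \frac{m!}{(r-|z|)^m}\norme{\phi}_{H^{\infty}(r\D)}$. For $z \in \rho\D$ we have $|z| < \rho$, hence $r - |z| > r - \rho$, which gives
\[
|\phi^{(m)}(z)| \leq \frac{m!}{(r-\rho)^m}\norme{\phi}_{H^{\infty}(r\D)}.
\]
Taking the supremum over $z \in \rho\D$ yields both $\phi^{(m)} \in H^{\infty}(\rho\D)$ and the stated inequality \eqref{inecau}.

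There is no real obstacle here; the only point to be slightly careful about is that $\phi$ is only assumed analytic on the open disk $r\D$, so one cannot directly take $s = r - |z|$ or $s = r - \rho$ in the contour. Working with $s$ strictly less than $r - |z|$ and passing to the limit at the end sidesteps this issue cleanly.
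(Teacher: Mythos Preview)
Your argument is correct and is exactly the standard Cauchy-estimate computation the paper has in mind; the paper itself does not spell out a proof but simply states that the lemma ``is a straightforward consequence of Cauchy inequalities.'' Your care with taking $s < r - |z|$ and passing to the limit is a nice touch, though one could equally well fix any $s$ with $r - \rho \le s < r - |z|$ directly.
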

This Lemma justifies the following definition. 
 \begin{df}
 	Let $T \in B(X)$ with $\sigma(T) \subset \Dbar$ and $m \in \N$. Then $T$ is said to have \textit{ $m$-derivative bounded functional calculus}  if there exists $C>0$ such that 
 	\begin{equation}\label{mfuncalc}
 	\norme{\phi^{(m)}(T)} \leq \frac{C}{(r-1)^m}\norme{\phi}_{H^{\infty}(r\mathbb{D})} \; \text{ for each $\phi \in H^{\infty}(r\mathbb{D})$ \text{ and } $r>1$}.
 	\end{equation}
 \end{df} 

\begin{rq1}
 It is easy to check that \eqref{mfuncalc} is equivalent to:

		\[
	\norme{\phi^{(m)}\big(rT\big)} \leq \frac{C}{(1-r)^m}\norme{\phi}_{H^{\infty}(\mathbb{D})} \; \text{ for each $\phi \in H^{\infty}(\mathbb{D})$ \text{ and } $0<r<1$}.
	\]
\end{rq1}

\begin{thm}\label{th6.4}
	Let $T \in B(X)$ with $\sigma(T) \subset \Dbar$. The following assertions are equivalent for $m \in \N$,
	\begin{enumerate}[label = (\roman*)]
		\item T has $(GFS)_m$;
		\item T has \dbfc{1};
		\item T has \dbfc{m}.		
	\end{enumerate}
\end{thm}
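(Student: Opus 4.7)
The plan is to prove the cycle $(i)\Rightarrow(ii)\Rightarrow(iii)\Rightarrow(i)$, using the integral representation \eqref{mIPP}, iterated Cauchy estimates, and a Cauchy-type test function for the last, most delicate implication.

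For $(i)\Rightarrow(ii)$: Proposition~\ref{propGFSmimpGF1} reduces $(GFS)_m$ to $(GFS)_1$. Given $\phi\in\Hinf{r}$ and $1<\rho<r$, formula \eqref{mIPP} with $m=1$ combined with $(GFS)_1$ yields
\begin{equation*}
|\langle\phi'(T)x,x^*\rangle|\leq\frac{\rho}{2\pi}\norme{\phi}_{\Hinf{\rho}}\int_0^{2\pi}|\langle R(\rho e^{it},T)^2x,x^*\rangle|\,dt\leq\frac{C\rho}{2\pi(\rho+1)(\rho-1)}\norme{\phi}_{\Hinf{\rho}}\norme{x}\norme{x^*}.
\end{equation*}
Using $\rho/(\rho+1)\leq 1$ and letting $\rho\to r^-$ gives $\norme{\phi'(T)}\leq C'/(r-1)\,\norme{\phi}_{\Hinf{r}}$.

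For $(ii)\Rightarrow(iii)$ (when $m\geq 2$), set $\rho=(1+r)/2$ so that $\rho-1=(r-1)/2=r-\rho$. By Lemma~\ref{lemcauchyine}, $\norme{\phi^{(m-1)}}_{\Hinf{\rho}}\leq\frac{(m-1)!\,2^{m-1}}{(r-1)^{m-1}}\norme{\phi}_{\Hinf{r}}$; applying $(ii)$ to $\phi^{(m-1)}\in\Hinf{\rho}$ produces
\begin{equation*}
\norme{\phi^{(m)}(T)}=\norme{(\phi^{(m-1)})'(T)}\leq\frac{C}{\rho-1}\norme{\phi^{(m-1)}}_{\Hinf{\rho}}\leq\frac{C\cdot 2^m\,(m-1)!}{(r-1)^m}\norme{\phi}_{\Hinf{r}}.
\end{equation*}

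The main obstacle is $(iii)\Rightarrow(i)$. Fix $\rho>1$, $x\in X$, $x^*\in X^*$, and pick a measurable $\alpha\colon[0,2\pi]\to\C$ with $|\alpha|\leq 1$ so that $\alpha(t)\langle R(\rho e^{it},T)^{m+1}x,x^*\rangle=|\langle R(\rho e^{it},T)^{m+1}x,x^*\rangle|$. Define the Cauchy-type test function
\begin{equation*}
\phi_\rho(z):=\int_0^{2\pi}\frac{\alpha(t)}{\rho e^{it}-z}\,dt,
\end{equation*}
which is analytic on $\rho\D$. Differentiating $m$ times under the integral and invoking Riesz--Dunford calculus yields $\phi_\rho^{(m)}(T)=m!\int_0^{2\pi}\alpha(t)R(\rho e^{it},T)^{m+1}\,dt$; by the choice of $\alpha$ and $(iii)$ applied on a smaller disk $r\D$ with $1<r<\rho$,
\begin{equation*}
m!\int_0^{2\pi}|\langle R(\rho e^{it},T)^{m+1}x,x^*\rangle|\,dt=\langle\phi_\rho^{(m)}(T)x,x^*\rangle\leq\frac{C}{(r-1)^m}\norme{\phi_\rho}_{\Hinf{r}}\norme{x}\norme{x^*}.
\end{equation*}
The crux is to bound $\norme{\phi_\rho}_{\Hinf{r}}$ uniformly over admissible $\alpha$ and to optimize $r=r(\rho)$ so that the right-hand side decays as $C'/[(\rho+1)(\rho-1)^m]$. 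The naive pointwise bound $|\phi_\rho(z)|\leq 2\pi/(\rho-|z|)$ together with $r=(1+\rho)/2$ yields only $C''/(\rho-1)^{m+1}$, which is short of $(GFS)_m$ near $\rho=1$ by a factor $(\rho-1)/(\rho+1)$. Obtaining the sharp scaling should follow from a finer estimate on the Cauchy transform of $\alpha$ --- for instance via the identity $\int_0^{2\pi}|\rho e^{it}-z|^{-2}\,dt=2\pi/(\rho^2-|z|^2)$ combined with Cauchy--Schwarz, or via integration by parts in $t$ exploiting the oscillation of $\alpha$ --- together with a delicate choice of $r$ in $(1,\rho)$.
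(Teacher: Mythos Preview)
Your arguments for $(i)\Rightarrow(ii)$ and $(ii)\Rightarrow(iii)$ are correct and coincide with the paper's proof.

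For $(iii)\Rightarrow(i)$ there is a genuine gap, and it lies exactly where you suspect. With the first-order test function $\phi_\rho(z)=\int_0^{2\pi}\alpha(t)(\rho e^{it}-z)^{-1}\,dt$, the bound you need on $\norme{\phi_\rho}_{\Hinf{r}}$ is simply not available uniformly in $\alpha$. The supremum over unimodular $\alpha$ of $|\phi_\rho(re^{i\theta})|$ equals $\int_0^{2\pi}|\rho e^{it}-re^{i\theta}|^{-1}\,dt$, an elliptic integral that blows up like $\log\bigl(1/(\rho-r)\bigr)$ as $r\to\rho^-$; no choice of $r\in(1,\rho)$ eliminates this logarithm. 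Your Cauchy--Schwarz suggestion, using $\int_0^{2\pi}|\rho e^{it}-z|^{-2}\,dt=2\pi/(\rho^2-|z|^2)$, yields only $\norme{\phi_\rho}_{\Hinf{r}}\lesssim(\rho^2-r^2)^{-1/2}$, which after optimizing $r$ still leaves a spurious factor $(\rho-1)^{-1/2}$. Integration by parts in $t$ cannot help either, since $\alpha$ is merely measurable and carries no exploitable oscillation.

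The paper's remedy is to raise the order of the kernel by one: set
\[
\phi(z)=\frac{1}{(m+1)!}\int_0^{2\pi}\frac{\varepsilon(t)}{(re^{it}-z)^{2}}\,dt,\qquad z\in\rho\D,\quad 1<\rho<r.
\]
Now the pointwise bound $|\phi(z)|\leq\frac{1}{(m+1)!}\int_0^{2\pi}|re^{it}-z|^{-2}\,dt$ is controlled \emph{exactly} by the Poisson-type identity you quoted, giving $\norme{\phi}_{\Hinf{\rho}}\leq\frac{2\pi}{(m+1)!(r+\rho)(r-\rho)}$ with the crucial factor $(r+\rho)$. Since $\phi^{(m)}(T)=\int_0^{2\pi}\varepsilon(t)R(re^{it},T)^{m+2}\,dt$, applying $(iii)$ on $\rho\D$ and choosing $\rho=(r+1)/2$ yields $(GFS)_{m+1}$; Proposition~\ref{propGFSmimpGF1} then brings you back down to $(GFS)_m$. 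The detour through $(GFS)_{m+1}$ is the missing idea.
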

\begin{proof}

		$(i) \Rightarrow (ii)$: According to Proposition \ref{propGFSmimpGF1} $T$ has $(GFS)_1$. Let $r>\rho >1$ and $\phi \in \Hinf{r}$. Then using \eqref{mIPP},
			\begin{align*}
		\norme{\phi'(T)} &= \frac{1}{2\pi}\underset{\norme{x}=1}\sup\underset{\norme{x^*}=1}\sup\Big|\int_{0}^{2\pi}\langle\rho e^{it} \phi(\rho e^{it})R(\rho e^{it},T)^{2}x,x^* \rangle dt\Big| \\
		&\leq \frac{\rho \norme{\phi}_{H^{\infty}(\rho \mathbb{D})}}{2\pi}\underset{\norme{x}=1}\sup\underset{\norme{x^*}=1}\sup\int_{0}^{2\pi}|\langle R(\rho e^{it},T)^{2}x,x^*\rangle|dt\\		
		&\leq \frac{C \rho }{2\pi(\rho -1)(\rho +1)} \norme{\phi}_{H^{\infty}(\rho \mathbb{D})} \leq  \frac{C }{2\pi(\rho -1)} \norme{\phi}_{H^{\infty}(r \mathbb{D})}.
		\end{align*}
		The second to last inequality comes from \eqref{strinem} with $m = 1$. Letting $\rho \rightarrow r$ yields
			\[
						\norme{\phi'(T)}    \leq  \frac{C }{2\pi(r -1)} \norme{\phi}_{H^{\infty}(r \mathbb{D})}.
			\]
		$(ii) \Rightarrow (iii)$: Let $1<\rho <r$ and $\phi \in H^{\infty}(r\mathbb{D})$. According to
		 Lemma \ref{lemcauchyine} $\phi^{(m-1)}\in H^{\infty}(\rho \mathbb{D})$ so applying the
		  \dbfc{1} to $\phi^{(m-1)}$ and again Lemma \ref{lemcauchyine} one has
		\begin{align*}
		\norme{\phi^{(m)}(T)} &= \norme{(\phi^{(m-1)})'(T)} \leq \frac{C }{\rho  -1}\norme{\phi^{(m-1)}}_{H^{\infty}(\rho \mathbb{D})} \\
		&  \leq \frac{(m-1)!C}{(\rho -1)(r-\rho )^{m-1}}\norme{\phi}_{H^{\infty}(r\mathbb{D})}.
		\end{align*}
		Taking $\rho = \frac{r+1}{2}$ yields
		\[
		\norme{\phi^{(m)}(T)}\leq \frac{2^m(m-1)!C}{(r-1)^m}\norme{\phi}_{H^{\infty}(r\mathbb{D})}.
		\]
		$(iii) \Rightarrow (i) $: Let $r>1$, $1 < \rho < r$, $x\in X$ and $x^*\in X^*$. There exists a measurable function $\varepsilon : [0,2\pi] \rightarrow \{z\in \C, \; |z| = 1\}$ such that for each $t \in [0,2\pi)$
		\[
		|\langle R(re^{it},T)^{m+2}x,x^*\rangle | = \varepsilon(t)\langle R(re^{it},T)^{m+2}x,x^*\rangle .
		\]
		Let define $\phi(z) := \displaystyle\frac{1}{(m+1)!}\int_{0}^{2\pi}\frac{\varepsilon(t)}{(re^{it}-z)^2}dt $ for $z\in \rho \mathbb{D}$. Then $\phi \in H^{\infty}( \rho \mathbb{D})$, with 
		\begin{equation}\label{residu2}
		\norme{\phi}_{\Hinf{\rho}} \leq \frac{2\pi}{(m+1)!(r+\rho)(r-\rho)}.
		\end{equation} 
		Indeed for each $\theta \in [0,2\pi)$, using \eqref{residu},
		\begin{align*}
		\int_{0}^{2\pi}\frac{1}{|re^{it}-\rho e^{i \theta }|^2}dt &=  \frac{1}{\rho^2}\int_{0}^{2\pi}\frac{1}{|\frac{r}{\rho}e^{it}-1|^2}dt \\
		& =    \frac{2\pi}{\rho^2(\frac{r}{\rho}+1)(\frac{r}{\rho}-1)} \\
	&= \frac{2\pi}{(r+\rho)(r - \rho)} .
		\end{align*}
		Moreover
		\[
		\phi^{(m)}(z) =\int_{0}^{2\pi}\frac{\varepsilon(t)}{(re^{it}-z)^{m+2}}dt, \; z\in \rho\mathbb{D}. 
		\]
		By Fubini's Theorem one has 
		\[
		\phi^{(m)}(T) = \int_{0}^{2\pi}\varepsilon(t)R(re^{it},T)^{m+2}dt,
		\]
		and finally using \eqref{mfuncalc} and \eqref{residu2},
		\begin{align*}
		\int_{0}^{2\pi}|\langle R(re^{it},T)^{m+2}x,x^*\rangle |& =  \int_{0}^{2\pi} \varepsilon(t)\langle R(re^{it},T)^{m+2}x,x^*\rangle  \\
		&= |\langle \phi^{(m)}(T)x,x^{*}\rangle | \leq \norme{\phi^{(m)}(T)}\norme{x}\norme{x^*} \\
		&\leq\frac{C}{(\rho-1)^m}\norme{\phi}_{H^{\infty}(\rho\mathbb{D})}\norme{x}\norme{x^*} \\
		&\leq \frac{2\pi C}{(m+1)!(\rho-1)^m(r-\rho)(r+\rho)}\norme{x}\norme{x^*} \\
		& \leq \frac{2\pi C}{(m+1)!(r+1)(\rho-1)^m(r-\rho)}\norme{x}\norme{x^*}.
		\end{align*}
Taking $\rho = \frac{r+1}{2}$ one obtains
\[
\int_{0}^{2\pi}|\langle R(re^{it},T)^{m+2}x,x^*\rangle |  \leq \frac{2^{m+2}\pi C}{(m+1)!(r+1)(r-1)^{m+1}}\norme{x}\norme{x^*}.
\]
Thus $T$ has $(GFS)_{m+1}$ and hence by Proposition \ref{propGFSmimpGF1}, $T$ has $(GFS)_{m}$. 
\end{proof}
\begin{cor}
	The condition $(GFS)_m$ does not depend of $m \in \N$. In other words, if $T\in B(X)$ satisfies condition $(GFS)_m$ for one $m \in \N$ then it has $(GFS)_k$ for all $k \in \N$.
\end{cor}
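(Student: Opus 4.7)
The plan is to chain the equivalences of Theorem \ref{th6.4} through a common middle condition, namely the $1$-derivative bounded functional calculus, which does not depend on $m$ at all. Suppose $T \in B(X)$ satisfies $(GFS)_m$ for some fixed $m \in \N$. First I would apply the implication $(i) \Rightarrow (ii)$ of Theorem \ref{th6.4} (with this specific $m$) to conclude that $T$ has the $1$-derivative bounded functional calculus. Since condition $(ii)$ is intrinsic and carries no reference to $m$, this conclusion is now ``free of $m$''.

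Next, for any $k \in \N$, I would apply $(ii) \Rightarrow (iii)$ of Theorem \ref{th6.4} with the integer $k$ in place of $m$ to obtain that $T$ has the $k$-derivative bounded functional calculus. Finally, $(iii) \Rightarrow (i)$ of Theorem \ref{th6.4} applied with $k$ yields that $T$ has $(GFS)_k$. Since $k \in \N$ was arbitrary, this proves the corollary.

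There is no real obstacle: the whole point of Theorem \ref{th6.4} is that all three conditions are equivalent for every $m$, and here one simply uses condition $(ii)$ as a pivot because it is stated without reference to the integer parameter. The only thing to be careful about is that the constants in each implication depend on $k$ (through factors like $2^k (k-1)!$), but this is irrelevant for the qualitative conclusion since $(GFS)_k$ only requires the existence of \emph{some} constant $C_k > 0$ for each $k$.
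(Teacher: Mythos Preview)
Your proposal is correct and is exactly the intended argument: the paper states this corollary immediately after Theorem \ref{th6.4} without further proof, precisely because condition $(ii)$ serves as the $m$-free pivot you describe. Your observation about the constants depending on $k$ but this being irrelevant for the qualitative statement is also appropriate.
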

\begin{df}
	Let $T \in B(X)$. If $T$ satisfies one of the three conditions of Theorem \ref{th6.4} then we will say that $T$ is a Gomilko Shi-Feng operator or GFS operator. 
\end{df}
When $X$ is an Hilbert space one obtains the following characterisation.
\begin{cor}\label{corHilcase}
	Let $H$ be a Hilbert space and let $T \in B(H)$ with $\sigma(T) \subset \Dbar$. The following assertions are equivalent,
	\begin{enumerate}[label = (\roman*)]
		\item $T$ is power-bounded;
		\item there is a constant $C>
		 0$ such that for all $r > 1$,
		\begin{align*}
		(r^2-1)\int_{0}^{2\pi}\norme{R(re^{it},T)x}^2dt \leq C\norme{x}^2 \quad (x\in H) \\
		(r^2-1)\int_{0}^{2\pi}\norme{R(re^{it},T)^*y}^2dt \leq C\norme{y}^2 \quad (y\in H); 
		\end{align*}
		\item T is a GFS operator.
        \end{enumerate}
	\end{cor}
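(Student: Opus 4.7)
The plan is to establish the cycle $(iii) \Rightarrow (i) \Rightarrow (ii) \Rightarrow (iii)$. The implication $(iii) \Rightarrow (i)$ is immediate: a GFS operator satisfies $(GFS)_1$ by Theorem \ref{th6.4}, and Proposition \ref{gfsimppowbounded} then gives power-boundedness. This implication is actually valid on any Banach space.

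For $(i) \Rightarrow (ii)$, the key ingredient is Plancherel's theorem, which is the place where the Hilbert space structure becomes indispensable. Since $\sigma(T) \subset \overline{\mathbb{D}}$, for $r > 1$ one has the Neumann series expansion
\[
R(re^{it},T)x = \frac{1}{r}\sum_{n=0}^{\infty}\frac{T^n x}{r^n}e^{-i(n+1)t},
\]
which is the Fourier series of $t\mapsto R(re^{it},T)x$. Applying Plancherel's identity and setting $M = \sup_n \norme{T^n}$, one gets
\[
\int_0^{2\pi}\norme{R(re^{it},T)x}^2\,dt = \frac{2\pi}{r^2}\sum_{n=0}^{\infty}\frac{\norme{T^n x}^2}{r^{2n}} \leq \frac{2\pi M^2}{r^2-1}\norme{x}^2.
\]
The same argument applied to $T^*$ (which is power-bounded with the same constant) yields the second inequality.

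For $(ii) \Rightarrow (iii)$, I would verify $(GFS)_1$ directly. Identifying $H^*$ with $H$, for $x,y\in H$ the resolvent identity $R(re^{it},T)^2 = R(re^{it},T)\circ R(re^{it},T)$ and the adjoint relation give
\[
|\langle R(re^{it},T)^2 x,y\rangle| = |\langle R(re^{it},T)x, R(re^{it},T)^* y\rangle| \leq \norme{R(re^{it},T)x}\,\norme{R(re^{it},T)^* y}.
\]
Integrating and applying the Cauchy--Schwarz inequality in $L^2(0,2\pi)$, then using the two estimates of $(ii)$, yields
\[
\int_0^{2\pi}|\langle R(re^{it},T)^2 x,y\rangle|\,dt \leq \frac{C}{r^2-1}\norme{x}\norme{y} = \frac{C}{(r+1)(r-1)}\norme{x}\norme{y},
\]
which is exactly $(GFS)_1$, hence $T$ is a GFS operator by Theorem \ref{th6.4}.

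The main obstacle is clearly $(i) \Rightarrow (ii)$: Plancherel's identity has no analogue on a general Banach space, and in fact this implication is known to fail beyond the Hilbert setting (which is the motivation for the $\gamma$-boundedness analysis in Section 4). The other two implications are essentially formal manipulations: $(ii) \Rightarrow (iii)$ is two applications of Cauchy--Schwarz, and $(iii) \Rightarrow (i)$ was already proved in greater generality.
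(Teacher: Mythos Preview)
Your proof is correct and follows essentially the same route as the paper: the cycle $(iii)\Rightarrow(i)\Rightarrow(ii)\Rightarrow(iii)$, with Proposition~\ref{gfsimppowbounded} for the first step, the Neumann series plus Plancherel for the second, and Cauchy--Schwarz in $L^2(0,2\pi)$ for the third. The only cosmetic difference is that the paper invokes Proposition~\ref{gfsimppowbounded} directly for $(iii)\Rightarrow(i)$ without passing through Theorem~\ref{th6.4}, but your extra reference is harmless.
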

\begin{proof}
The implication $(iii) \implies (i)$ is given by Proposition \ref{gfsimppowbounded}. \\
	$(i) \implies (ii)$: Let $r> 1$, one has
	\[
	\int_{0}^{2\pi}\norme{R(re^{it},T)x}^2dt = 	\int_{0}^{2\pi}\norme{\sum_{n=0}^{\infty} \frac{T^nx}{(re^{it})^{n+1}}}^2dt.
	\]
The Fourier-Plancherel Theorem gives
\[
\int_{0}^{2\pi}\norme{\sum_{n=0}^{\infty} \frac{T^nx}{(re^{it})^{n+1}}}^2dt = 2\pi \sum_{n=0}^{\infty} \bigg(\frac{\norme{T^nx}}{r^{n+1}}\bigg)^2.
\]
Then taking $M = \underset{n}\sup\{\norme{T^n} \}$, we have
\[
2\pi \sum_{n=0}^{\infty} \bigg(\frac{\norme{T^n}}{r^{n+1}}\bigg)^2 \leq 2\pi M^2 \sum_{n=0}^{\infty} \frac{1}{r^{2(n+1)}} = \frac{2\pi M^2}{r^2-1},
\]
whence the first inequality in $(ii)$. Since $T^*$ is also power bounded, the second inequality in $(ii)$ holds as well. \\
$(ii) \implies (iii)$: by Cauchy-Schwarz inequality one has 
\begin{align*}
\int_{0}^{2\pi}|\langle R(re^{it},T)^{2}x,x^*\rangle | &= \int_{0}^{2\pi}|\langle R(re^{it},T)x,R(re^{it},T)^*x^*\rangle | \\
& \leq \bigg(\int_{0}^{2\pi}\norme{R(re^{it},T)x}^2dt\bigg)^{\frac{1}{2}}\bigg(\int_{0}^{2\pi}\norme{R(re^{it},T)^*x^*}^2dt\bigg)^{\frac{1}{2}} \\
& \leq \frac{C}{r^2-1}\norme{x}\norme{x^*}.
\end{align*}
If follows that $T$ has $(GFS)_1$, and so $T$ is a GFS operator.	
	\end{proof}
\begin{rqs}\mbox{~} \\
	\begin{enumerate}
	\item We say that $T \in B(X)$ is polynomially bounded when there exists $C>0$ such that for each polynomial $P$ one has
	\[
	\norme{P(T)} \leq C\norme{P}_{\Hinf{}}.
	\] 
	It is easy to show that if $T$ is polynomially bounded, then $T$ is a GFS operator. Indeed let $P$ be a polynomial and $r>1$. Then according to Lemma \ref{lemcauchyine} and the assumption that $T$ is polynomially bounded:
	\[
	\norme{P^{(m)}(T)} \leq C\norme{P^{(m)}}_{\Hinf{}} \leq \frac{Cm!}{(r-1)^m}\norme{P^{(m)}}_{\Hinf{r}}.
	\]
 The converse implication is false. 
 In fact, on any infinite-dimensional Hilbert space there exists a power bounded
	   operator which is not polynomially bounded (see \cite{leb}). But, according to Corollary \ref{corHilcase}, in a Hilbert space an operator
	     is power bounded if and only if it is a GFS operator.

\item When $X$ is not an Hilbert space, the implication $(i) \implies (iii)$ in
 Corollary \eqref{corHilcase} is false. Indeed according to \cite[Theorem 2.2]{gom2},
  if $X$ is a reflexive Banach space and $T \in B(X)$ with $\sigma(T) \subset
   \mathbb{T}$ then $T$ is a scalar type spectral operator if and only if $T$
    and $T^{-1}$ have $(GFS)_1$. Let $U$ denote the shift operator defined
     on $l_p(\Z)$ $(1<p<\infty)$ by
\begin{equation}\label{shiftoperator}
U((x_n)_{n\in \Z}) =(x_{n+1})_{n\in \Z}.
\end{equation}
It is well-known that for $1<p \neq 2 < \infty$, $U$ is not a scalar type spectral operator. It
 follows, that either $U$ or $U^{-1}$ does not have $(GFS)_1$. Further $U$ and $U^{-1}$ are similar. Hence by Proposition \ref{GFSsimilarity}, $U$ does not have $(GFS)_1$. Thus $U$ is a power bounded operator which is not a GFS operator.  
\end{enumerate}
\end{rqs}

\section{Polynomial Besov calculus on general Banach spaces}

We denote by $\B(\D)$ the Banach algebra of all holormorphic functions $f$ on $\D$ such that 
\begin{equation}\label{Bes}
\int_{0}^{1} \underset{t \in [0,2\pi)}\sup|f'(ue^{it})| du < \infty,
\end{equation}
endowed with the norm
\[
\norme{f}_{\B} = \int_{0}^{1} \underset{t \in [0,2\pi)}\sup|f'(ue^{it})| du + \norme{f}_{\Hinf{}}.
\]
We remark that by a change of variables $u =\frac{1}{r}$ one has
 \begin{equation}\label{normeB1overr}
\int_{0}^{1} \underset{t \in [0,2\pi)}\sup|f'(ue^{it})| du = \int_{1}^{\infty} \frac{1}{r^2}\underset{t \in [0,2\pi)}\sup\Big|f'\Big(\frac{e^{it}}{r}\Big)\Big| dr .
 \end{equation}

Let $H$ be a Hilbert space and let $T \in B(H)$ be a power-bounded operator. According to \cite[Theorem 3.8.]{pel1} there exists $C>0$ such that for each polynomial $P$ one has
\[
\norme{P(T)} \leq C \norme{P}_{\B}.
\]
Now since the set of polynomials is dense in $\B(\D)$ the bounded algebra homomorphism $P \rightarrow P(T)$ extends to a bounded algebra homomorphism on $\B(\D)$. 

 We now consider the issue of obtaining a similar bounded Besov calculus on an arbitrary Banach space $X$. Ideas are based on \cite{bgt1}.
 
 \begin{prop}\label{polybesovfc}
 Let $T \in B(X)$ be a $GFS$ operator. Then there exists $C>0$ such that for each polynomial $P$,
 \[
\norme{P(T)} \leq C\norme{P}_{\B}.
 \]
 \end{prop}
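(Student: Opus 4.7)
The plan is to combine the $1$-derivative bounded functional calculus of $T$ --- equivalent to the $(GFS)_1$ condition by Theorem \ref{th6.4} --- with a contour representation that expresses $P(T) - P(0)I$ as an integral of $P'$ against the resolvent kernel $R(re^{it},T)^2$, in the spirit of the semigroup argument of \cite{bgt1}.

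Concretely, I would start from the identity $P(T) - P(0)I = T \int_0^1 P'(uT)\,du$, obtained by integrating $\frac{d}{du}P(uT) = TP'(uT)$, and, for each $u \in (0,1)$, apply formula \eqref{mIPP} with $m=1$ to the polynomial $\psi_u(z) := u^{-1}\bigl(P(uz)-P(0)\bigr)$, whose derivative at $T$ equals $P'(uT)$. This yields, for any $\rho > 1$,
\[
P'(uT) = \frac{1}{2\pi u}\int_0^{2\pi} \rho e^{it}\bigl[P(u\rho e^{it}) - P(0)\bigr] R(\rho e^{it},T)^2 \, dt.
\]
Expanding $P(u\rho e^{it}) - P(0) = u\rho e^{it}\int_0^1 P'(s u \rho e^{it})\,ds$, making the change of variable $v = s u \rho$, taking $\rho = 1/u$ (so that $v$ ranges exactly over $(0,1)$), and finally substituting $r = 1/u$ in the outer $u$-integral, one arrives at the representation
\[
P(T) - P(0)I = \frac{T}{2\pi} \int_0^1 \int_0^{2\pi} \int_1^\infty e^{2it} P'(ve^{it}) R(re^{it},T)^2 \, dr\, dt\, dv,
\]
in which $P'$ is sampled on circles $\{|z|=v\}\subset\D$ --- precisely the region controlled by the Besov seminorm --- while the operator kernel $R(re^{it},T)^2$ for $r>1$ is the object governed by $(GFS)_1$.

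To conclude, I would bound the $t$-integral using $(GFS)_1$, pull out $\sup_t |P'(ve^{it})|$, and apply Fubini to recognise $\int_0^1 \sup_t|P'(ve^{it})|\,dv$; together with the contribution of $|P(0)|$ from the constant term this reassembles $\norme{P(T)} \leq C \norme{P}_\B$.

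The main obstacle is the boundary behaviour at $r = 1^+$: the factor $1/(r^2-1)$ coming from $(GFS)_1$ is not integrable there, so the bare absolute-value estimate of the $r$-integral diverges. Overcoming this requires either exploiting cancellation inside the $t$-integral --- the Neumann expansion $R(re^{it},T)^2 = \sum_{n\geq 0}(n+1)T^n e^{-i(n+2)t}/r^{n+2}$ kills most Fourier modes against $e^{2it}P'(ve^{it})$, so the combined $(t,r)$-integral is only conditionally convergent with much better size than the crude bound suggests --- or decomposing $P$ dyadically as $P = P(0) + \sum_k Q_k$ with $Q_k$ Fourier-supported in $[2^{k-1},2^{k+1}]$ and establishing the uniform block estimate $\norme{Q_k(T)} \leq C\norme{Q_k}_\infty$ via the $1$-derivative calculus applied at the adapted radius $\rho = 1+2^{-k}$; summing then gives $\norme{P(T)} \leq C\bigl(|P(0)| + \sum_k \norme{Q_k}_\infty\bigr) \asymp C \norme{P}_\B$. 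This delicate interplay between the singularity of the resolvent kernel and the integrability furnished by the Besov seminorm is the heart of the argument.
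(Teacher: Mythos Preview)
Your integral representation is correct but, as you yourself note, cannot be estimated directly: the variables $v$ (argument of $P'$) and $r$ (argument of the resolvent) are completely decoupled, so after applying $(GFS)_1$ to the $t$-integral you are left with $\int_1^\infty (r^2-1)^{-1}\,dr$, which diverges. Your first proposed fix, ``exploiting cancellation'' via the Neumann series, does not repair this: the cancellation you describe merely reconfirms the identity $P(T)-P(0)I = T\int_0^1 P'(vT)\,dv$ you started from, and any attempt to integrate in $r$ first collapses to the boundary resolvent $R(e^{it},T)$, which need not exist. So this route, as written, does not close.

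The paper's proof avoids the decoupling altogether by tying the argument of $P'$ to the radius $r$ and inserting the weight $(r^2-1)$ \emph{before} estimating. Concretely, from \eqref{mIPP} one has $T^n = \frac{1}{2\pi(n+1)}\int_0^{2\pi} r^{n+2}e^{i(n+2)t}R(re^{it},T)^2\,dt$ for every $r>1$; multiplying by the weight $(r^2-1)/r^{2n+3}$ and integrating over $r\in(1,\infty)$ using the exact identity $\int_1^\infty (r^2-1)r^{-2n-3}\,dr = \frac{1}{2n(n+1)}$ yields, after summing in $n$,
\[
\langle P(T)x,x^*\rangle - P(0)\langle x,x^*\rangle = \frac{1}{\pi}\int_1^\infty (r^2-1)\int_0^{2\pi}\frac{e^{3it}}{r^2}\,P'\!\Big(\frac{e^{it}}{r}\Big)\,\langle R(re^{it},T)^2 x,x^*\rangle\,dt\,dr.
\]
Now the factor $(r^2-1)$ sits \emph{outside} the $t$-integral and cancels the $(r^2-1)^{-1}$ from $(GFS)_1$, while $P'$ is evaluated at radius $1/r$, so the remaining $r$-integral is exactly $\int_1^\infty r^{-2}\sup_t|P'(e^{it}/r)|\,dr$, the Besov seminorm \eqref{normeB1overr}. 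The whole point is that the weight identity manufactures the compensating $(r^2-1)$; your derivation, starting from the fundamental theorem of calculus twice, cannot produce it.

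Your second fallback, the dyadic block estimate $\|Q_k(T)\|\le C\|Q_k\|_\infty$ via the $1$-derivative calculus at radius $1+2^{-k}$, is a genuine alternative and can be made to work, but it needs the intermediate step $\|\phi_k\|_{H^\infty((1+2^{-k})\D)}\lesssim 2^{-k}\|Q_k\|_\infty$ for the antiderivative $\phi_k$ of $Q_k$; this follows from the spectral localisation of $Q_k$ but is not stated in your sketch. If you pursue that line you should supply it.
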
	
  \begin{proof}
  	Let $n \in \N$ and consider $P_n(z) = z^n$ 
  	Applying \eqref{mIPP} with $m=1$ and $\phi(z) = \frac{z^{n+1}}{n+1}$, one has the following representation, 
  	\[
  	T^n = \frac{1}{2\pi(n+1)}\int_{0}^{2\pi} r^{n+2}e^{i(n+2)t}R^2(re^{it},T)dt,
  	\] 	
for any $r>1$. We deduce that for any $x\in X, x^*\in X^*$,
  \begin{align}
   \langle T^nx,x^* \rangle  &= 2n(n+1)\int_{1}^{\infty} \frac{r^2-1}{r^{2n+3}}dr \langle T^nx,x^* \rangle \nonumber \\
   &  = 2n(n+1)\int_{1}^{\infty} \frac{r^2-1}{r^{2n+3}}\frac{1}{2\pi(n+1)}\int_{0}^{2\pi} r^{n+2} e^{i(n+2)t}\langle R^2(re^{it},T)x,x^*\rangle dtdr \nonumber \\
    	&= \frac{n}{\pi}\int_{1}^{\infty} \frac{r^2-1}{r^{n+1}}\int_{0}^{2\pi}  e^{i(n+2)t}\langle R^2(re^{it},T)x,x^*\rangle dtdr \nonumber \\
    & =   \frac{n}{\pi}\int_{1}^{\infty} (r^2-1)\int_{0}^{2\pi} \frac{e^{3it}}{r^2} \frac{e^{i(n-1)t}}{r^{n-1}} \langle R^2(re^{it},T)x,x^*\rangle dtdr \nonumber \\
     &\label{equB}= \frac{1}{\pi}\int_{1}^{\infty} (r^2-1)\int_{0}^{2\pi} \frac{e^{3it}}{r^2} P_n'\Big(\frac{e^{it}}{r}\Big) \langle R^2(re^{it},T)x,x^*\rangle dtdr,
  \end{align}
  where, in the first equality, we have used the following identity
    	\[
  \int_{1}^{\infty} \frac{r^2-1}{r^{2n+3}}dr = \frac{1}{2n(n+1)}.
  \]
Now let $P$ be a polynomial. Using the linearity of derivative and integration, and previous calculations one obtains
\begin{equation}\label{polybesovrepr}
\langle P(T)x,x^* \rangle = \frac{1}{\pi}\int_{1}^{\infty} (r^2-1)\int_{0}^{2\pi} \frac{e^{3it}}{r^2} P'\Big(\frac{e^{it}}{r}\Big) \langle R^2(re^{it},T)x,x^*\rangle dtdr + P(0) \langle x,x^*\rangle.
\end{equation}
Since $T$ has $(GFS)_1$ there exists $C>0$ such that 
\[
(r^2-1)\int_{0}^{2\pi}|\langle R(re^{it},T)^{2}x,x^*\rangle|dt \leq C\norme{x}\norme{x^*},\quad r>1, x\in X, x^* \in X^*.
\]
Therefore combining this with \eqref{normeB1overr},
\begin{equation}\label{estimationpolybesovfc}
|\langle P(T)x,x^* \rangle| \leq  C'\Big(\int_{1}^{\infty} \frac{1}{r^2} \big|P'\Big(\frac{e^{it}}{r}\Big)\big| dr + |P(0)| \Big)\norme{x}\norme{x^*}\leq C'\norme{P}_{\B}\norme{x}\norme{x^*},
\end{equation}
   which proves the result.
  
 \end{proof}	
 The above proof leads to the following result:
 
 \begin{thm}
 	Let $T\in B(X)$ a $GFS$ operator. Then the mapping $P \rightarrow P(T)$ on the set of polynomials can be extended uniquely to a bounded homomorphism from Besov algebra $\B(\D)$ to the algebra of bounded operators $B(X)$. When $f \in \B(\D)$ let $f(T)$ denote the operator obtained by this extension. 
 	
 	Then for each $f \in \B(\D)$ one has the following representation:
for $x \in X$ and $x^* \in X^*$:
\begin{equation} \label{eqonB}
\langle f(T)x,x^*\rangle = \frac{1}{\pi}\int_{1}^{\infty} (r^2-1)\int_{0}^{2\pi} \frac{e^{3it}}{r^2}f'\Big(\frac{e^{it}}{r}\Big) \langle R^2(re^{it},T)x,x^*\rangle dtdr + \langle f(0)x,x^* \rangle.
\end{equation}
 	
 \end{thm}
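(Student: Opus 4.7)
The plan is to derive both the existence/uniqueness of the extension and the representation formula \eqref{eqonB} from Proposition~\ref{polybesovfc} via a density argument. First I would use the (standard) density of polynomials in the Banach algebra $\B(\D)$, which was already invoked informally before Proposition~\ref{polybesovfc} in the Hilbert space discussion. Combined with the polynomial estimate $\norme{P(T)} \leq C\norme{P}_{\B}$ of Proposition~\ref{polybesovfc}, the classical extension theorem for bounded linear maps from a dense subspace yields a unique bounded linear map $f \mapsto f(T)\colon \B(\D) \to B(X)$ agreeing with the polynomial calculus on polynomials.

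To upgrade this extension to an algebra homomorphism, I would fix $f,g \in \B(\D)$ and choose polynomials $P_n \to f$ and $Q_n \to g$ in $\B(\D)$. Since $\B(\D)$ is a Banach algebra, $P_n Q_n \to fg$ in $\B$, so $(P_n Q_n)(T) \to (fg)(T)$ in $B(X)$. At the polynomial level $(P_n Q_n)(T) = P_n(T) Q_n(T)$, and the right-hand side converges to $f(T) g(T)$ by boundedness of the polynomial calculus; hence $(fg)(T) = f(T) g(T)$. Uniqueness of the extension is automatic from density.

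For the integral representation \eqref{eqonB}, I would pass to the limit in identity \eqref{polybesovrepr} along polynomials $P_n \to f$ in $\B$. The left-hand side $\langle P_n(T)x, x^*\rangle$ converges to $\langle f(T)x, x^*\rangle$ by the extension step. On the right-hand side, $P_n(0) \to f(0)$ because $|P_n(0) - f(0)| \leq \norme{P_n - f}_{\Hinf{}} \leq \norme{P_n - f}_{\B}$. For the double integral, applying the $(GFS)_1$ estimate to the inner integral over $t$ and using the change of variables \eqref{normeB1overr} bounds the difference by
\[
\frac{C}{\pi}\,\norme{x}\norme{x^*} \int_{1}^{\infty} \frac{1}{r^2} \underset{t \in [0,2\pi)}\sup \bigl| (P_n - f)'\bigl(\tfrac{e^{it}}{r}\bigr)\bigr|\, dr \;\leq\; \frac{C}{\pi}\,\norme{x}\norme{x^*}\,\norme{P_n - f}_{\B},
\]
which tends to zero. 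This is exactly the same estimate as in \eqref{estimationpolybesovfc}, applied now to the differences $P_n - f$.

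The main obstacle is the density of polynomials in $\B(\D)$. The standard argument proceeds by dilation: for $f \in \B(\D)$ and $0<s<1$, the function $f_s(z) := f(sz)$ is holomorphic in a neighbourhood of $\Dbar$, hence is the $\B$-limit of its Taylor polynomials; then $f_s \to f$ in $\B$ as $s\uparrow 1$ by dominated convergence, using the $L^1(du)$-majorant $\underset{t \in [0,2\pi)}\sup|f'(ue^{it})|$ coming from the very definition \eqref{Bes} of the Besov norm. Once density is granted, the three steps above are routine applications of boundedness and continuity.
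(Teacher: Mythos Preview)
Your proposal is correct and follows essentially the same approach as the paper: both use density of polynomials in $\B(\D)$ together with the estimate of Proposition~\ref{polybesovfc} to extend the calculus, and both pass to the limit in the polynomial identity \eqref{polybesovrepr} via the bound \eqref{estimationpolybesovfc} applied to $f-P_n$. The paper phrases the last step slightly differently---it first defines $f_{\B}(T)$ by the right-hand side of \eqref{eqonB} and then shows $f(T)=f_{\B}(T)$---but this is the same argument; you also spell out the homomorphism property and the dilation proof of density, which the paper simply takes for granted.
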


\begin{proof}
The first assertion is straighforward since the set of polynomials is dense in Besov algebra $\B(\D)$. Let $f\in \B(\D)$ and denote by $f_{\B}(T)$ the operator defined for $x\in X$ and $x^* \in X^*$ by
\[
\langle f_{\B}(T)x,x^*\rangle = \frac{1}{\pi}\int_{1}^{\infty} (r^2-1)\int_{0}^{2\pi} \frac{e^{3it}}{r^2}f'\Big(\frac{e^{it}}{r}\Big) \langle R^2(re^{it},T)x,x^*\rangle dtdr + \langle f(0)x,x^* \rangle.
\]
The calculation at the end of the proof of Proposition \ref{polybesovfc} shows that this is a well-defined element of $B(X,X^{**})$.
Now let $(P_n)_{n\in \N}$ a sequence of polynomials such that 
\[
\norme{f-P_n}_{\B} \underset{n \rightarrow \infty }\longrightarrow 0.
\]
By definition of $f(T)$, one has for $x\in X$ and $x^* \in X^*$
\[
\langle P_n(T)x,x^*\rangle \underset{n \rightarrow \infty }\longrightarrow \langle f(T)x,x^*\rangle.
\]
Furthermore by the same arguments leading to the estimate \eqref{estimationpolybesovfc} one has for $x\in X$ and $x^* \in X^*$,
\[
|\langle (f_{\B}(T)-P_n(T))x,x^*\rangle|\leq C\norme{f-P_n}_{\B}\norme{x}\norme{x^*}.
\] 
We conclude that $f(T) = f_{\B}(T)$ and therefore we have shown equality \eqref{eqonB}.
\end{proof}	
In general when we extend a functional calculus to a larger class, we do not know what the operators
 obtained on this larger class look like. The above Theorem has more value because we do not only extend
  polynomial calculus to the  Besov algebra but also we are able to give a good representation of
   operators obtained by extension. 

The next result is a converse to Proposition \ref{polybesovfc}.
\begin{prop}
	Let $T \in B(X)$ with $\sigma(T) \subset \Dbar$. If there exists $C>0$ such that for each polynomial $P$
	\[
	\norme{P(T)} \leq C \norme{P}_{\B},
	\]is a GFS operator.
\end{prop}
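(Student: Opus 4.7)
The plan is to show that $T$ has the $1$-derivative bounded functional calculus and then invoke Theorem \ref{th6.4} to conclude that $T$ is a GFS operator.

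First, I would record that $T$ is power bounded. Applying the hypothesis to $P(z)=z^n$ and computing directly $\norme{z^n}_{\B}=\int_0^1 nu^{n-1}du+1=2$, one gets $\norme{T^n}\le 2C$ for every $n$. This power-boundedness will be used later to justify a passage to the limit.

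Next, for any polynomial $P$ and any $r>1$, the goal is to bound the Besov norm of $P'$ in terms of $\norme{P}_{\Hinf{r}}$. Cauchy's inequalities on $r\D$ give $|P''(z)|\le 2(r-|z|)^{-2}\norme{P}_{\Hinf{r}}$ and $|P'(z)|\le (r-|z|)^{-1}\norme{P}_{\Hinf{r}}$. Integrating the first estimate over $u\in(0,1)$ yields $\int_0^1 \sup_t |P''(ue^{it})|\,du\le \frac{2}{r(r-1)}\norme{P}_{\Hinf{r}}$, while the second gives $\norme{P'}_{\Hinf{}}\le (r-1)^{-1}\norme{P}_{\Hinf{r}}$. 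Adding the two contributions produces a constant $C_0>0$ such that
\[
\norme{P'}_{\B}\le \frac{C_0}{r-1}\norme{P}_{\Hinf{r}}.
\]
Since $P'$ is itself a polynomial, applying the hypothesis to $P'$ then gives
\[
\norme{P'(T)}\le \frac{CC_0}{r-1}\norme{P}_{\Hinf{r}}.
\]

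The main obstacle is extending this bound from polynomials to arbitrary $\phi\in\Hinf{r}$. For this I would use the Cesaro means $\sigma_N(\phi)$ of the Taylor series $\phi(z)=\sum a_n z^n$. For every $1<\rho<r$, the Fejer kernel representation gives $\norme{\sigma_N(\phi)}_{\Hinf{\rho}}\le \norme{\phi}_{\Hinf{\rho}}\le \norme{\phi}_{\Hinf{r}}$, so the polynomial estimate yields
\[
\norme{\sigma_N(\phi)'(T)}\le \frac{CC_0}{\rho-1}\norme{\phi}_{\Hinf{r}}.
\]
Since $T$ is power bounded and $|a_n|\le \norme{\phi}_{\Hinf{r}}r^{-n}$, a direct tail estimate shows $\sigma_N(\phi)'(T)\to\phi'(T)=\sum_n na_n T^{n-1}$ in the operator norm of $B(X)$. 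Passing to the limit in $N$, and then letting $\rho\to r^-$, gives
\[
\norme{\phi'(T)}\le \frac{CC_0}{r-1}\norme{\phi}_{\Hinf{r}}.
\]
This is precisely the $1$-derivative bounded functional calculus, so Theorem \ref{th6.4} implies that $T$ is a GFS operator.
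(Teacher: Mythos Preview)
Your proof is correct and follows essentially the same route as the paper: bound $\norme{\phi'}_{\B}$ by $C_0(r-1)^{-1}\norme{\phi}_{\Hinf{r}}$ via Cauchy's inequalities, apply the Besov hypothesis, and conclude the $1$-derivative bounded functional calculus so that Theorem~\ref{th6.4} applies. The only difference is that the paper applies the estimate $\norme{\phi'(T)}\le C\norme{\phi'}_{\B}$ directly to an arbitrary $\phi\in\Hinf{r}$, implicitly using that polynomials are dense in $\B(\D)$ so that the polynomial hypothesis extends; you make this passage explicit by first proving power boundedness and then approximating via Ces\`aro means with the uniform control $\norme{\sigma_N(\phi)}_{\Hinf{\rho}}\le\norme{\phi}_{\Hinf{r}}$. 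Your version is slightly longer but fills in a step the paper leaves to the reader.
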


\begin{proof}
Let $r>1$ and $\phi \in H^{\infty}(r\D)$. Then $\phi' \in \B(\D)$, indeed by \eqref{inecau}
\begin{align*}
\int_{0}^{1} \underset{t \in [0,2\pi)}\sup|\phi''(s e^{it})| ds & = \int_{0}^{1} \norme{\phi''}_{H^{\infty}(s\D)} ds \\
 & \leq  \int_{0}^{1} \frac{2}{(r-s)^2}  \norme{\phi}_{H^{\infty}(r\D)} ds \\
& = \frac{2}{r(r-1)}\norme{\phi}_{H^{\infty}(r\D)}.
\end{align*}
It follows 
\[
\norme{\phi'(T)} \leq C\norme{\phi'}_{\B}\leq \frac{4C}{(r^2-1)} \norme{\phi}_{H^{\infty}(r\D)}.
\]
Therefore $T$ has \dbfc{1}, hence $T$ is a GFS operator.
\end{proof}

\begin{rq1}
It is known (see \cite[Lemma 2.3.7]{white} for details) that there exists $C >0$ such that for each polynomial of degree $N$ one has
\[
\norme{P}_{\B} \leq C log(N+2)\norme{P}_{H^{\infty}(\D)}.
\]	
Therefore if $T$ has $(GFS)_1$ then there exists $C>0$ such that for each polynomial of degree $N$,
\[
\norme{P(T)} \leq C log(N+2)\norme{P}_{H^{\infty}(\D)}.
\]
\end{rq1}
	
\section{Generalizations involving $\gamma$-boundedness}
This main goal of this section is to obtain an extension of Corollary \ref{corHilcase} to Banach spaces. We have noticed that it is not possible to replace $H$ by a Banach space $X$ without additional assumptions. We need assumptions on the operator $T$. We prove an analogue of Corallary \ref{corHilcase} if $\{T^n: n\in \N \}$ is $\gamma$-bounded. We start with some background on the so-called $\gamma$-spaces introduced by Kalton and Weis in \cite{kal-wei1}.

Let $X,Y$ be Banach spaces and 
we let $(\gamma_n)_{n \geq 1}$ be a sequence of independent 
complex valued standard Gaussian variables on some probability space $\Sigma$. We denote by $G(X)$ the closure of 
\[
\big\{\sum_{k=1}^n \gamma_k \otimes x_k\, :\, x_k \in X, \, n\in \N \big\}
\]
in $L^2(\Sigma,X)$. For $x_1,\ldots, x_n \in X$, we let 
\[
\norme{\sum_{k=1}^n \gamma_k \otimes x_k}_{G(X)} := 
\bigg(\underset{\Sigma}\bigint{\norme{\sum_{k=1}^n \gamma_k(\lambda)x_k}^2d\lambda}\bigg)^{\frac{1}{2}}
\] 
denote the induced norm.

\begin{df}
	Let $\mathcal{T} \subset B(X,Y)$ be a set of operators. We say that
	$\mathcal{T}$ is $\gamma$-bounded if
	there exists a constant $C \geq 0 $ such that for all finite sequences 
	$(T_n)_{n=1}^{N} \subset \mathcal{T}$ and $(x_n)_{n=1}^{N} \subset X$, the following inequality holds:
	\begin{equation}\label{Rboundedness}
	\norme{\sum_{n=1}^N \gamma_n\otimes T_nx_n}_{G(Y)} \leq C\norme{\sum_{n=1}^N \gamma_n\otimes x_n}_{G(X)}.
	\end{equation}
	The least admissible constant in the above inequality is called the 
	$\gamma$-bound of  $\mathcal{T}$ and we denote this quantity by $\gamma(\mathcal{T})$. 
	If $\mathcal{T}$ fails to be $\gamma$-bounded, we set $ \gamma(\mathcal{T}) = \infty$. 
\end{df}
The notion of $\gamma$-boundedness is stronger than uniform boundedness, indeed using definition of $\gamma$-boundedness with $N=1$ it is easy to see that $\gamma$-boundedness implies uniform boundedness. 
We will use also two important facts which we sum up in the following proposition.
\begin{prop}\label{propusalgamma}
	Let $\mathcal{T} \subset B(X,Y)$ be  a $\gamma$-bounded set. Then
	\begin{enumerate}
		\item  the closure $\overline{\mathcal{T}}^{so}$  of $\mathcal{T}$ in the 
		strong operator topology is $\gamma$-bounded with 
		$\gamma(\overline{\mathcal{T}}^{so}) = \gamma(\mathcal{T}) $.	
		\item  The absolute convex hull of $\mathcal{T}$, $absconv(\mathcal{T})$ is $\gamma$-bounded with $\gamma(absconv(\mathcal{T})) = \gamma(\mathcal{T})$.
	\end{enumerate}
\end{prop}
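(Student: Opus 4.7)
The proof splits naturally into two parts, with the inequalities $\gamma(\mathcal{T}) \leq \gamma(\overline{\mathcal{T}}^{so})$ and $\gamma(\mathcal{T}) \leq \gamma(absconv(\mathcal{T}))$ being trivial consequences of the inclusions; the substance lies in the reverse estimates. The common observation I will use is that, for fixed $N$, the map $(z_1, \ldots, z_N) \mapsto \sum_{k=1}^N \gamma_k \otimes z_k$ from $Y^N$ to $G(Y)$ is a bounded linear operator, so that finite Gaussian sums depend continuously on their list of vectors.

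For (1) I will proceed by approximation. Given $S_1,\ldots,S_N \in \overline{\mathcal{T}}^{so}$, $x_1,\ldots,x_N \in X$ and $\varepsilon > 0$, I pick $T_k \in \mathcal{T}$ with $\norme{T_k x_k - S_k x_k} \leq \varepsilon$, possible by the very definition of the SOT closure applied pointwise at each $x_k$. The observation above yields $\norme{\sum_k \gamma_k \otimes (T_k x_k - S_k x_k)}_{G(Y)} \leq c_N \varepsilon$ for a constant $c_N$ depending only on $N$. Combining this with the $\gamma$-bound $\norme{\sum_k \gamma_k \otimes T_k x_k}_{G(Y)} \leq \gamma(\mathcal{T}) \norme{\sum_k \gamma_k \otimes x_k}_{G(X)}$, the triangle inequality in $G(Y)$, and then letting $\varepsilon \to 0$, I obtain $\gamma(\overline{\mathcal{T}}^{so}) \leq \gamma(\mathcal{T})$.

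For (2) I will reduce to two sub-claims: (a) $\gamma$-boundedness is preserved under multiplying each operator by a scalar of modulus at most one; (b) $\gamma$-boundedness is preserved under taking finite convex combinations. These suffice since $absconv(\mathcal{T}) = \text{conv}(\{\lambda T : T \in \mathcal{T},\, |\lambda| \leq 1\})$. For (a), after rewriting $\gamma_k \otimes \lambda_k T_k x_k = \gamma_k \otimes T_k(\lambda_k x_k)$ and applying the $\gamma$-bound of $\mathcal{T}$ to the vectors $\lambda_k x_k$, the conclusion reduces to the Gaussian contraction principle $\norme{\sum_k \lambda_k \gamma_k \otimes x_k}_{G(X)} \leq \norme{\sum_k \gamma_k \otimes x_k}_{G(X)}$ for $|\lambda_k| \leq 1$. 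For (b), given $S_k = \sum_{j=1}^M p_{k,j} T_{k,j}$ as convex combinations padded to a common index range, I introduce independent $\{1,\ldots,M\}$-valued random indices $\xi_k$ on an auxiliary probability space $(\Omega, P)$ with $P(\xi_k = j) = p_{k,j}$, so that $E_\omega[T_{k,\xi_k(\omega)} x_k] = S_k x_k$. Convexity of the norm on $G(Y)$ then yields
\begin{equation*}
\norme{\sum_{k=1}^N \gamma_k \otimes S_k x_k}_{G(Y)} \leq E_\omega\norme{\sum_{k=1}^N \gamma_k \otimes T_{k,\xi_k(\omega)} x_k}_{G(Y)} \leq \gamma(\mathcal{T}) \norme{\sum_{k=1}^N \gamma_k \otimes x_k}_{G(X)},
\end{equation*}
the last inequality being the $\gamma$-bound of $\mathcal{T}$ applied $\omega$-by-$\omega$, since each realisation $(T_{1,\xi_1(\omega)}, \ldots, T_{N,\xi_N(\omega)})$ lies in $\mathcal{T}^N$.

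The main obstacle I anticipate is justifying the Gaussian contraction principle used in (a): although classical, it requires care to formulate for the complex standard Gaussians used here, and the cleanest route is via rotational invariance and covariance-monotonicity of Gaussian vectors (cf.\ \cite{hnvw}). Everything else reduces to routine continuity, triangle-inequality and Jensen-type manipulations.
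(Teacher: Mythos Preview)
The paper does not supply a proof of this proposition: it is stated as a standard fact about $\gamma$-boundedness, with no argument given. Your proof is correct and follows the classical route (SOT approximation for part~(1); Gaussian contraction plus a randomisation/Jensen argument for the convex hull in part~(2)), which is essentially how these results are established in the reference \cite{hnvw} that the paper relies on throughout.
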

We now turn to the definition of $\gamma$-spaces.

Let $H$ be a Hilbert space.
A linear operator $T : H \rightarrow X$ is called $\gamma$-summing if 
\[
\norme{T}_{\gamma} := \sup \norme{\sum_{n=1}^{N}\gamma_n\otimes Th_n }_{G(X)} < \infty,
\]
where the supremum is taken over all finite orthonormal systems $\{h_1,...,h_n \}$ in $H$. 
We let $\gamma_{\infty}(H;X)$ denote the space of all $\gamma$-summing operators and 
we endow it with the norm $\norme{\cdot}_{\gamma}$. Then
$\gamma_{\infty}(H;X)$  is a Banach space. Clearly any finite rank (bounded) operator 
is a $\gamma$-summing operator. We let $\gamma(H;X)$ be
the closure in $\gamma_{\infty}(H;X)$ of the space of finite rank operators from $H$ into $X$. 

We let $\gamma'_{\infty}(H;X)$ be the space of bounded operators $T : H \mapsto X$ such that 
\[
\norme{T}_{\gamma'(H;X)} = \sup\{ trace (T^* \circ S) \; | \; S : H \mapsto X^*,\, \norme{S}_{\gamma(H;X)\leq 1}, \, dim\,S(H) < \infty   \}
\]
and we denote by $\gamma'(H;X)$ the closure of the finite dimensional operators in $\gamma'_{\infty}(H;X)$. See \cite[section 5]{kal-wei1} for details about spaces $\gamma'(H;X)$ and $\gamma_{\infty}'(H;X)$. When $X$ is $K$-convex (see \cite[Section 7.4.]{hnvw} for details about $K$-convexity) then one has 
\[
\gamma'(H;X^*) = \gamma(H;X^*). 
\]

Let $(S,\mu)$ be a measure space. We say that a function 
$f : S \rightarrow X$ is weakly $L^2$ if for each $x^* \in X^*$,  
the function $s \mapsto \langle f(s),x^* \rangle $ is measurable and belongs to $L^2(S)$. 
If $f : S \rightarrow X$ is measurable and weakly $L^2$, one can define an operator $\mathbb{I}_f : L^2(S)\to X$, 
given by
\[
\mathbb{I}_f(g) := \int_S g(s)f(s)d\mu , \quad g\in L^2(S),
\] 
where this integral is defined in the Pettis sense.
We let $\gamma(S;X)$ (resp. $\gamma'(S,X)$) be the space of all measurable and weakly $L^2$
functions $f : S \rightarrow X$ such that $\mathbb{I}_f$
belongs to $\gamma(L^2(S);X)$ (resp. $\gamma'(L^2(S);X)$). We endow it with $\norme{f}_{\gamma(S;X)}:= 
\norme{\mathbb{I}_f}_{\gamma(L^2(S);X)}$ (resp. $\norme{f}_{\gamma'(S;X)}:= 
\norme{\mathbb{I}_f}_{\gamma'(L^2(S);X)}$).

The next result is an inequality of Hölder type \cite[Corollary 5.5.]{kal-wei1}.  
\begin{thm}[$\gamma$-Hölder inequality]\label{inetrace}
	If $f: S \rightarrow X$ and $g : S \rightarrow X^*$ belongs to 
	$\gamma(S;X)$ and $\gamma'(S;X^*)$, respectively, then $\langle f,g\rangle$ belong to $L^1(S)$ and we have
	\[
	\norme{\langle f,g\rangle}_{L^1(S)} \leq \norme{f}_{\gamma(S;X)}\norme{g}_{\gamma'(S;X^*)}.
	\]
\end{thm}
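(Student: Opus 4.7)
The plan is to exploit the trace duality that defines the $\gamma'$-norm, working first with simple (finite-rank) data and then extending by density. For simple functions $f = \sum_{i} \mathbf{1}_{A_i}\otimes x_i$ and $g = \sum_{j} \mathbf{1}_{B_j}\otimes x_j^*$, with $(A_i)$ and $(B_j)$ disjoint sets of finite measure, the central identity is
\[
\int_S \langle f(s), g(s)\rangle\, d\mu(s) \;=\; \mathrm{trace}\bigl(\mathbb{I}_f^{\,*}\circ \mathbb{I}_g\bigr),
\]
which I would check by passing to a common refinement of the partitions $(A_i), (B_j)$ and the associated orthonormal system in $L^2(S)$: both sides then reduce to $\sum_{i,j}\mu(A_i\cap B_j)\langle x_i, x_j^*\rangle$.

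Combined with the definition of $\norme{\cdot}_{\gamma'(H;X^*)}$ recalled in the excerpt, this identity immediately gives
\[
\Bigl|\int_S \langle f,g\rangle\, d\mu\Bigr| \leq \norme{\mathbb{I}_f}_{\gamma}\norme{\mathbb{I}_g}_{\gamma'} = \norme{f}_{\gamma(S;X)}\norme{g}_{\gamma'(S;X^*)}
\]
for simple $f,g$. To upgrade the absolute value outside the integral to one inside, choose a measurable unimodular function $h: S\to \mathbb{T}$ with $h(s)\langle f(s),g(s)\rangle = |\langle f(s),g(s)\rangle|$ almost everywhere; pointwise multiplication by $h$ is a unitary operator on $L^2(S)$, hence $\norme{hf}_{\gamma(S;X)} = \norme{f}_{\gamma(S;X)}$, and applying the previous bound to $hf$ in place of $f$ yields $\norme{\langle f,g\rangle}_{L^1(S)} \leq \norme{f}_\gamma \norme{g}_{\gamma'}$ for simple $f,g$.

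To pass to general $f\in \gamma(S;X)$ and $g\in \gamma'(S;X^*)$, use that, by definition, $\gamma(L^2(S);X)$ and $\gamma'(L^2(S);X^*)$ are the closures of the finite-rank operators, which are represented precisely by simple functions. Taking approximating sequences and applying the established bound uniformly, the inequality passes to the limit and simultaneously yields integrability of $\langle f, g\rangle$.

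The main obstacle is the trace identity, primarily because $\mathbb{I}_g$ takes values in $X^*$: the composition $\mathbb{I}_f^{\,*}\circ \mathbb{I}_g$ must be interpreted using the canonical embedding $X\hookrightarrow X^{**}$, and one must verify that approximating $f,g$ by simple functions yields convergence of the corresponding $\mathbb{I}_{f_n}, \mathbb{I}_{g_n}$ in the $\gamma$ and $\gamma'$ norms (not merely pointwise or weakly). Once this bookkeeping is settled, the remainder mirrors the classical duality proof of H\"older's inequality.
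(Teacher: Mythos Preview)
The paper does not prove this theorem: it is quoted verbatim from \cite[Corollary~5.5]{kal-wei1} and used as a black box, so there is no ``paper's own proof'' to compare against. Your sketch is a reasonable reconstruction of the standard trace-duality argument behind that result.

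Two small points worth tightening. First, the claim that finite-rank operators ``are represented precisely by simple functions'' is not literally true: a finite-rank operator $L^2(S)\to X$ has the form $h\mapsto \sum_i \langle h,g_i\rangle x_i$ with $g_i\in L^2(S)$ arbitrary, not indicator functions. What is true (and sufficient) is that simple functions are dense among such operators in the $\gamma$-norm, because $\norme{g\otimes x}_{\gamma}=\norme{g}_2\norme{x}$ and simple functions are dense in $L^2(S)$. Second, the unimodular-phase step is placed slightly out of order: if $f$ is simple and $h$ is a general measurable phase, $hf$ is no longer simple, so you cannot apply the already-established bound to it. The clean fix is to first pass to the limit and establish $\bigl|\int_S\langle f,g\rangle\,d\mu\bigr|\le \norme{f}_{\gamma}\norme{g}_{\gamma'}$ for all $f\in\gamma(S;X)$, $g\in\gamma'(S;X^*)$, and only then multiply $f$ by the unimodular $h$ (using, as you note, that multiplication by $h$ is unitary on $L^2(S)$ and hence an isometry on $\gamma(S;X)$ by the extension principle recorded in Lemma~\ref{isofourierGamma}). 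With these adjustments the argument goes through.
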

\begin{rq1}\label{rqgamgam'}
Let $N\in \N$, $x_1, \ldots, x_N \in X$ and $x_1^*, \ldots, x_N^* \in X^*$. By construction, 
\begin{equation}\label{GXeqgam}
\norme{\sum_{k=1}^N \gamma_k \otimes x_k }_{G(X)} = \norme{(x_k)_{k \in \N_N}}_{\gamma(\N_N;X)}.
\end{equation}
As a particular case of Theorem \ref{inetrace} one has 
\[
\big|\sum_{k=1}^N\langle x_k, x^*_k \rangle \big| \leq \norme{(x_k)_{k \in \N_N}}_{\gamma(\N_N;X)}\norme{(x^*_k)_{k \in \N_N}}_{\gamma'(\N_N;X^*)}.
\]
Further this inequality is optimal, namely
\begin{equation}\label{dualxk}
 \norme{(x_k)_{k \in \N_N}}_{\gamma(\N_N;X)} = \sup \Big\{ \big|\sum_{k=1}^N\langle x_k, x^*_k \rangle \big| :  (x^*_k)_{k\in \N_N} \subset X^*, \, \norme{(x^*_k)_{k \in \N_N}}_{\gamma'(\N_N;X^*)} \leq 1 \Big\}
\end{equation}
and
\begin{equation}\label{dualxk*}
\norme{(x^*_k)_{k \in \N_N}}_{\gamma'(\N_N;X^*)} = \sup \Big\{ \big|\sum_{k=1}^N\langle z_k,x_k^* \rangle \big| : (x_k)_{k\in \N_N} \subset X, \,  \norme{(x_k)_{k \in \N_N}}_{\gamma(\N_N;X)} \leq 1 \Big\}.
\end{equation}
Furthermore for each sequence $(x^*_k)_{k\in \N} \subset X^*$, one has 
\begin{equation}\label{limitxk*}
\norme{(x^*_k)_{k \in \N}}_{\gamma'(\N;X^*)} = \lim\limits_{N \rightarrow +\infty} \norme{(x^*_k)_{k \in \N_N}}_{\gamma'(\N_N;X^*)}.
\end{equation}

\end{rq1}	
We finally state an extension result (\cite[Theorem 9.6.1.]{hnvw} and \cite[Remark 5.4.]{kal-wei1}).
\begin{lem}\label{isofourierGamma}
Let $H$ and $K$ be Hibert spaces and $U \in B(H^*,K^*)$. Then $U \otimes I_X$ extends uniquely to a bounded linear operator $\widetilde{U} \in B(\gamma(H;X), \gamma(K;X))$ of the same norm.\\
Moreover $U \otimes I_{X^*}$ extends uniquely to a bounded linear operator $\widehat{U} \in B(\gamma'(H;X^*), \gamma'(K;X^*))$ of the same norm.
\end{lem}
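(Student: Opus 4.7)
The plan is to reduce the statement to the standard right-ideal property of $\gamma$-radonifying operators, namely: if $T \in \gamma(H;X)$ and $S \in B(K,H)$, then $T \circ S \in \gamma(K;X)$ with $\norme{T\circ S}_{\gamma(K;X)} \le \norme{T}_{\gamma(H;X)}\,\norme{S}$. First I would use the Riesz isomorphisms $\iota_H : H \to H^*$ and $\iota_K : K \to K^*$ to transport $U$: set $V := \iota_K^{-1} \circ U \circ \iota_H$. Since both $\iota_H$ and $\iota_K^{-1}$ are conjugate-linear, their composition with the linear $U$ in the middle yields a linear operator $V \in B(H,K)$ with $\norme{V^*} = \norme{V} = \norme{U}$. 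A direct computation on an elementary tensor $h^* \otimes x$, realized as the rank-one operator $T_{h^*,x} : h \mapsto h^*(h)\,x$, gives $U(h^*)(k) = \langle k, V(\iota_H^{-1}h^*)\rangle_K = h^*(V^*k)$, so that $U(h^*) \otimes x$ is precisely the operator $k \mapsto T_{h^*,x}(V^*k)$. Hence on the dense subspace of finite-rank operators in $\gamma(H;X)$, the map $U \otimes I_X$ agrees with right-composition by $V^*$.

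Applying the ideal property with $S = V^*$ then yields, for every finite-rank $T : H \to X$,
\[
\norme{T \circ V^*}_{\gamma(K;X)} \le \norme{V^*}\,\norme{T}_{\gamma(H;X)} = \norme{U}\,\norme{T}_{\gamma(H;X)}.
\]
Since finite-rank operators are dense in $\gamma(H;X)$ by definition, $U \otimes I_X$ extends uniquely by continuity to $\widetilde{U} \in B(\gamma(H;X),\gamma(K;X))$ with $\norme{\widetilde U} \le \norme{U}$. For the reverse bound I would test on rank-one tensors $T = h^* \otimes x$ with $\norme{x} = 1$: a Parseval computation gives $\norme{T}_{\gamma(H;X)} = \norme{h^*}$ and $\norme{\widetilde U(T)}_{\gamma(K;X)} = \norme{U(h^*)}$, so taking the supremum over $\norme{h^*} \le 1$ recovers $\norme{\widetilde U} \ge \norme{U}$.

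For the $\gamma'$-statement I would dualize through the trace pairing used in the paper to define $\gamma'$: the transpose of the already-constructed $\widetilde U$ for this pairing provides an extension $\widehat U$ of $U \otimes I_{X^*}$ from $\gamma'(H;X^*)$ to $\gamma'(K;X^*)$ of the same norm; equivalently, one can run the same right-ideal argument directly in $\gamma'$, using the analogue of the composition estimate. The main obstacle here is not any analytic estimate but the bookkeeping with the conjugate-linear Riesz identifications and with the two distinct notions of adjoint involved (the Hilbert-space adjoint $V^*$ versus the trace-duality adjoint) so that both $\widetilde U$ and $\widehat U$ really do restrict to $U \otimes I$ on the common dense subspace of finite-rank tensors.
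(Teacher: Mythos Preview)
The paper does not prove this lemma; it is stated with references to \cite[Theorem 9.6.1]{hnvw} and \cite[Remark 5.4]{kal-wei1} and then used as a black box. Your argument for the $\gamma$-part is exactly the standard proof behind the first of these references: recognize that on finite ranks $U\otimes I_X$ coincides with right-composition by a fixed $W\in B(K,H)$ with $\norme{W}=\norme{U}$, invoke the right-ideal inequality $\norme{T\circ W}_{\gamma(K;X)}\le\norme{W}\,\norme{T}_{\gamma(H;X)}$, and extend by density of finite ranks. One cosmetic simplification: instead of threading the conjugate-linear Riesz maps, use reflexivity of $K$ to write $U$ directly as the Banach adjoint of $W:=U^{*}\!\restriction_{K}\in B(K,H)$; then $(Uh^{*})(k)=h^{*}(Wk)$ by definition, and all your conjugate-linearity caveats disappear. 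Your lower bound via rank-one tensors $h^{*}\otimes x$ is correct, since $\norme{h^{*}\otimes x}_{\gamma}=\norme{h^{*}}\,\norme{x}$.

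For the $\gamma'$-part your outline is right but thin. Of the two routes you mention, the cleaner one is the second: the same right-ideal inequality $\norme{T\circ W}_{\gamma'(K;X^{*})}\le\norme{W}\,\norme{T}_{\gamma'(H;X^{*})}$ holds for the $\gamma'$-norm (this is the content of \cite[Remark 5.4]{kal-wei1}). Concretely, for finite-rank $S:K\to X^{**}$ one rewrites the pairing $\mathrm{trace}\big((T\circ W)^{*}S\big)$ as a pairing of $T$ against $S$ precomposed with the Hilbert adjoint $W^{\dagger}:H\to K$, and then applies the $\gamma$-ideal bound already established to $S\circ W^{\dagger}$. Once that is in place, density of finite ranks in $\gamma'(H;X^{*})$ and the same rank-one test finish the job. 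The ``dualization through the trace pairing'' you sketch first is essentially the same computation, but you should be explicit that the transpose of $\widetilde U$ with respect to the trace duality really is right-composition by $W$ on the $\gamma'$-side; as written that step is asserted rather than checked.
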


\subsection{Power $\gamma$-bounded operators} 
\begin{df}
	Let $T \in B(X)$ be power bounded. We say that $T$ is power $\gamma$-bounded if the set $\{T^n: n\in \N\}$	is $\gamma$-bounded. When $T$ is power $\gamma$-bounded we will denote by $C_{\gamma}$ the $\gamma$-bound $\gamma(\{T^n: n\in \N\cup \{0\} \})$.
\end{df}

\begin{ex}
	The shift operator $U$ defined by \eqref{shiftoperator} is obviously power bounded but for $1 < p\neq 2 < \infty$, $U$ is not power $\gamma$-bounded. Assume that $1<p <2$ and let $(\delta_0)_{n\in \Z }$ the sequence defined by $\delta_0(0)=1$ and 
	$\delta(k) = 0$ for each $k \neq 0$. Then one has 
	\[
	\norme{\bigg(\sum_{i = 1}^n |U^{(i-1)}\delta_0|^2 \bigg)^{\frac{1}{2}}}_{l_p} = n^{\frac{1}{p}} 
	\]
	whereas 
	\[
	\norme{\bigg(\sum_{i = 1}^n |\delta_0|^2 \bigg)^{\frac{1}{2}}}_{l_p} = n^{\frac{1}{2}}. 
	\]
	The proof in the case $2<p<\infty$ is similar. 
\end{ex}

\begin{ex}
	A Ritt operator $T$ is called $\gamma$-Ritt if the set 
	\[
	\{(\lambda-1)R(\lambda,T): \lambda \in \C\backslash\overline{\D} \}
	\]
	is $\gamma$-bounded. It turns out $T$ is $\gamma$-Ritt if and only if the two sets 
	\[
	\{T^n: n\in \N\} \quad \text{and}\quad \{n(I-T)T^n: n\in \N \}
	\]
	are $\gamma$-bounded. It follows that any $\gamma$-Ritt operator is power $\gamma$-bounded. In \cite{arn-lem} we construct a Ritt operator such that the set $\{T^n: n\in \N\}$ is not $\gamma$-bounded (in particular $T$ is not $\gamma$-Ritt). An interesting problem which is still open is to construct a Ritt operator which is not $\gamma$-Ritt but which is power $\gamma$-bounded. 
\end{ex}

\begin{ex}
	Let $(\Omega, \mu)$ be a measure space, $m : \Omega \rightarrow \overline{\D}$ an element of $L^{\infty}(\Omega)$ and $1 \leq p < \infty$. We define the bounded multiplier $T_m \in B(L^p(\Omega))$ by 
	\[
	(T_mf)(s) := m(s)f(s), \quad s \in \Omega.
	\] 
	For each $n \in \N$ one has 
	\[
	(T_m)^n = T_{m^n}.
	\]
	Since for each $n \in \N$, $\normeinf{m^n} \leq 1$, it follows (see \cite[Example 8.1.9]{hnvw}) that $T_m$ is power $\gamma$-bounded. 
\end{ex}

Polynomial boundedness does not imply power $\gamma$-boundedness. Indeed
according to \cite[Proposition 6.6]{lem1} there exist a Banach space $X$ and an invertible operator $T \in B(X)$ such that
$T$ is not power $\gamma$-bounded but there exists a bounded unital homomorphism $\omega : C(\T) \rightarrow B(X)$
such that $\omega(z \mapsto z) = T$. Obviously that operator $T$ is polynomially bounded.

\begin{thm}
	Suppose $X$ has property $(\alpha)$. Let $T \in B(X)$ be invertible and let $m \in \N$. Then $T$ and $T^{-1}$ are GFS operators if and only if $\{T^n, n\in \Z \}$ is $\gamma$-bounded.
\end{thm}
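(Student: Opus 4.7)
The plan is to reduce the equivalence to the statement ``$T$ is a GFS operator if and only if $T$ is power $\gamma$-bounded'', which should hold on any Banach space with property $(\alpha)$, and then to apply it to both $T$ and $T^{-1}$. Since subsets of $\gamma$-bounded families are $\gamma$-bounded and the union of two $\gamma$-bounded families is $\gamma$-bounded, the set $\{T^n : n\in\Z\}$ is $\gamma$-bounded if and only if the two families $\{T^n : n\geq 0\}$ and $\{T^{-n} : n\geq 0\}$ are simultaneously $\gamma$-bounded. So it is enough to prove, on any Banach space $X$ with property $(\alpha)$ and for every $T\in B(X)$ with $\sigma(T)\subset\Dbar$, that $T$ is a GFS operator if and only if $T$ is power $\gamma$-bounded.

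The implication ``power $\gamma$-bounded $\Rightarrow$ GFS'' does not require property $(\alpha)$. By the preceding results of this section, if $T$ is power $\gamma$-bounded then $T$ satisfies the $\gamma$-Gomilko Shi-Feng condition \eqref{gammagfsintro}. Specialising \eqref{gammagfsintro} to the case $N=1$ collapses the $\gamma$-norm and $\gamma'$-norm onto the $X$-norm and $X^*$-norm respectively, which yields condition $(GFS)_1$ for $T$; hence $T$ is a GFS operator by Theorem \ref{th6.4}.

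The converse is the substantive part. Suppose $T$ is a GFS operator. By Theorem \ref{th6.4}, the set in \eqref{primefuncalc} is norm bounded. The key step is to use property $(\alpha)$ to promote this to $\gamma$-boundedness of the same set, along the familiar Kalton--Weis lines: represent each operator $(r-1)\phi'(T)$ via \eqref{mIPP} at the radius $\rho=(r+1)/2$, use the pointwise scalar bound $|\phi(\rho e^{it})|\leq\|\phi\|_{H^{\infty}(r\D)}$, and invoke property $(\alpha)$ to move from a single Gaussian sum over the indexing parameters $(r_k,\phi_k,x_k)$ to a doubly indexed Gaussian sum that factorises the index $k$ and a discretisation of the variable $t$; the norm-bounded derivative calculus then controls the result uniformly in $k$ and $t$. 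Once \eqref{primefuncalc} is $\gamma$-bounded, the $\gamma$-analogue of Theorem \ref{th6.4}, proved earlier in this section without any assumption on $X$, gives that $T$ satisfies \eqref{gammagfsintro}, which in turn is equivalent to $T$ being power $\gamma$-bounded. Applying the same equivalence to $T^{-1}$ concludes the proof.

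The main obstacle is the property-$(\alpha)$ upgrade from norm boundedness of \eqref{primefuncalc} to its $\gamma$-boundedness: this is the sole point where the hypothesis on $X$ enters, and its function is precisely to convert the scalar pointwise bound $|\phi|\leq 1$ into a Gaussian bound that is uniform over the indexing parameters. The remark about the shift on $\ell^p$ with $p\neq 2$ in the previous section already shows that, without property $(\alpha)$, a power bounded operator need not even be a GFS operator, so some geometric hypothesis of this kind is genuinely needed for the $(\Rightarrow)$ direction.
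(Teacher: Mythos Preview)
Your proposal has a genuine gap in the direction ``$T$ and $T^{-1}$ GFS $\Rightarrow$ $\{T^n : n \in \Z\}$ $\gamma$-bounded''. You attempt to prove the strictly stronger one-sided statement that, on any space with property $(\alpha)$, a single GFS operator is automatically power $\gamma$-bounded, by upgrading the norm-boundedness of the set \eqref{primefuncalc} to $\gamma$-boundedness. The sketch you give for this upgrade does not work. Writing $(r-1)\phi'(T)$ via \eqref{mIPP} produces an integral $\int_0^{2\pi} \phi(\rho e^{it})\,\rho e^{it} R(\rho e^{it},T)^2\,dt$ with scalar multiplier $|\phi(\rho e^{it})|\leq 1$; but the operator-valued factor $R(\rho e^{it},T)^2$ is \emph{not} pointwise bounded in $t$ --- the GFS hypothesis only controls the weak $L^1$-norm $\int_0^{2\pi} |\langle R(\rho e^{it},T)^2 x, x^*\rangle|\,dt$. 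So there is no uniformly bounded family indexed by $(k,t)$ to which a property-$(\alpha)$ double-randomisation argument can be applied. The Kalton--Weis upgrade you allude to needs an honest bounded $H^\infty$ calculus (so that $\{f(A):\|f\|_\infty\leq 1\}$ is bounded in operator norm), and the derivative bounded calculus is strictly weaker than that. Note also that the paper later exhibits (via \cite{arn-lem}) a Ritt, hence GFS, operator that is not power $\gamma$-bounded; your one-sided claim would therefore require ruling out that this example lives on a space with property $(\alpha)$.

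The paper's argument is completely different and exploits the \emph{joint} hypothesis on $T$ and $T^{-1}$ in an essential way, passing through an intermediate $C(\T)$-functional calculus. By \cite[Proposition~6.3(2)]{lem1}, on a space with property $(\alpha)$ the set $\{T^n : n \in \Z\}$ is $\gamma$-bounded if and only if there exists a bounded unital homomorphism $\omega : C(\T) \to B(X)$ with $\omega(z\mapsto z)=T$; and by (the proof of) \cite[Theorem~2.2]{gom2}, such a homomorphism exists if and only if \emph{both} $T$ and $T^{-1}$ satisfy $(GFS)_1$. The two GFS hypotheses are consumed together to build the $C(\T)$-calculus; the equivalence does not decouple into two independent one-sided statements as your plan requires.
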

\begin{proof}
	Since $X$ has property $(\alpha)$ then according to \cite[Proposition 6.3. (2)]{lem1}, 
	the set $\{T^n, n\in \Z \}$ is $\gamma$-bounded if and only if there is a bounded unital homomorphism $\omega : C(\T) \rightarrow B(X)$
	such that $\omega(z \mapsto z) = T$. 
	Furthermore, according to the proof of \cite[Theorem 2.2.]{gom2}, there is a bounded unital homomorphism $\omega : C(\T) \rightarrow B(X)$
	such that $\omega(z \mapsto z) = T$ if and only if $T$ and $T^{-1}$ have $(GFS)_1$. The result is now straighforward.
\end{proof}


\subsection{$\gamma$-discrete Gomilko Shi-Feng condition}

\begin{df}
	
	Let $m\geq 1$ be an integer and $T \in B(X)$ with $\sigma(T) \subset \Dbar$.
	We say that $T$ has property $(\gamma$-$GFS)_{m}$ if there exists
	a constant $C>0$ such that for any 
	$N \in \N$, for any  $r_1, \ldots , r_N > 1$, and for any
	$x_1, \ldots , x_N \in X$ and $x_1^*, \ldots , x_N^* \in X^*$, we have
	\begin{align}\label{gammagfs}	 
	&\sum_{k=1}^{N}\int_{\R}|\langle (r_k+1)(r_k-1)^m R(r_k e^{it},T)^{m+1}x_k,x_k^* \rangle| dt \\
	&	\leq C\norme{(x_k)_{k \in \N_N}}_{\gamma(\N_N; X)}\norme{(x^*_k)_{k \in \N_N}}_{\gamma'(\N_N; X^*)} \nonumber.
	\end{align}
\end{df}
Obviously if $T$ has $(\gamma$-$GFS)_m$ then $T$ has $(GFS)_m$. Moreover on Hilbert spaces, properties $(\gamma$-$GFS)_m$ and $(GFS)_m$ are equivalent.

\begin{prop}\label{gamgfsimplgambound}
	If $T$ has $(\gamma$-$GFS)_{m}$ then $T$ is power $\gamma$-bounded, that is, the set $\{T^n: n\in \N \}$ is $\gamma$-bounded. 
\end{prop}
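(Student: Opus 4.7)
The plan is to imitate the scalar argument of Proposition \ref{gfsimppowbounded} in a vector-valued form, using the $\gamma$-$\gamma'$ duality pairing recalled in Remark \ref{rqgamgam'} to reduce the $\gamma$-boundedness of $\{T^n : n \in \N\}$ to the testing inequality appearing in \eqref{gammagfs}.

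First I would fix $N \in \N$, vectors $x_1, \ldots, x_N \in X$, and integers $n_1, \ldots, n_N \in \N$. By \eqref{GXeqgam} together with \eqref{dualxk}, it is enough to exhibit a constant $C'$ (independent of $N$, of the $x_k$ and of the $n_k$) such that
\[
\Big|\sum_{k=1}^N \langle T^{n_k}x_k, x_k^* \rangle\Big| \leq C'\,\|(x_k)\|_{\gamma(\N_N;X)}\,\|(x^*_k)\|_{\gamma'(\N_N;X^*)}
\]
for every test sequence $(x_k^*)_{k \in \N_N}$ in $X^*$. The remainder of the argument is devoted to producing such an estimate.

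To each index $k$ I would attach the radius $r_k = 1 + 1/n_k$ and the function $\phi_{n_k}(z) = z^{n_k+m}/((n_k+1)\cdots(n_k+m))$, whose $m$-th derivative is $z^{n_k}$. Applying \eqref{mIPP} to $\phi_{n_k}$ at radius $r_k$ and pairing with $x_k^*$ represents $\langle T^{n_k}x_k, x_k^*\rangle$ as an integral against $R(r_k e^{it}, T)^{m+1}$. I would then insert the normalising factor $(r_k+1)(r_k-1)^m$ so that the integrand matches the one appearing in \eqref{gammagfs}, up to a scalar prefactor
\[
C_k := \frac{m!\, r_k\, \|\phi_{n_k}\|_{\Hinf{r_k}}}{2\pi (r_k+1)(r_k-1)^m}.
\]
The uniform estimate $\sup_k C_k \leq M$ with $M$ depending only on $m$ is exactly the computation already carried out in the proof of Proposition \ref{gfsimppowbounded}, with the choice $r_k = 1 + 1/n_k$; no extra work is needed.

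It then remains to absorb the bounded scalars $C_k$ into the vectors $x_k$. By Kahane's contraction principle for Gaussian sums one has $\|(C_k x_k)\|_{\gamma(\N_N;X)} \leq M\,\|(x_k)\|_{\gamma(\N_N;X)}$, so applying property $(\gamma\text{-}GFS)_m$ to the sequence $(C_k x_k, x_k^*)_k$ yields a bound of the desired form $CM\,\|(x_k)\|_{\gamma(\N_N;X)}\,\|(x^*_k)\|_{\gamma'(\N_N;X^*)}$. Taking the supremum over $(x_k^*)$ with $\|(x_k^*)\|_{\gamma'(\N_N;X^*)} \leq 1$ and invoking \eqref{dualxk} completes the proof. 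The only point requiring even mild care is the uniform control of the prefactors $C_k$, which is entirely parallel to the scalar case; the $\gamma$-aspect is handled cleanly by the $\gamma/\gamma'$ duality pairing together with the contraction principle, so there is no genuine obstacle.
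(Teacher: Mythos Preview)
Your proposal is correct and follows essentially the same route as the paper's proof: represent $T^{n_k}$ via \eqref{mIPP} with the auxiliary function $\phi_{n_k}$, choose $r_k = 1 + 1/n_k$, feed the resulting sum of integrals into the $(\gamma\text{-}GFS)_m$ hypothesis, and conclude via the duality formula \eqref{dualxk}. The one cosmetic difference is that the paper keeps the reciprocal prefactor $1/C_k$ on the left-hand side and bounds it from below (so no contraction principle is invoked), whereas you push $C_k$ into the vectors and appeal to Kahane's contraction; both are valid and amount to the same thing.
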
	

\begin{proof}
	The proof is similar to the proof of Proposition \ref{gfsimppowbounded}. Let $N \in \N$, $n_1,\cdots, n_N \in \N$, $r_1,\ldots,r_N > 1$, $x_1,\ldots,x_N \in X$ and $x_1^*,\ldots,x_N^* \in X^*$. For $x_1^*,\ldots,x_N^* \in X^*$ such that $\norme{(x^*_k)_{k \in \N_N}}_{\gamma'(\N_N; X^*)} \leq 1$ the proof of \eqref{inepowgfs} and the assumption yield
	\begin{align*}
	&\sum_{k=1}^N \frac{2\pi(n_k+1)\ldots(n_k+m)(r_k+1)(r_k-1)^m}{r^{n_k+m+1}m!}|<T^{n_k}x_k,x_k^*>|\\
	& \leq \sum_{k=1}^N \int_{0}^{2\pi}(r_k+1)(r_k-1)^m|\langle R(r_ke^{it},T)^{m+1}x_k,x_k^* \rangle| dt \\
	& \leq C\norme{(x_k)_{k \in \N_N}}_{\gamma(\N_N; X)}\norme{(x^*_k)_{k \in \N_N}}_{\gamma'(\N_N; X^*)} \\
	& \leq C\norme{(x_k)_{k \in \N_N}}_{\gamma(\N_N; X)}.
	\end{align*}
	Taking $r_k = 1 + \frac{1}{n_k}$ one obtains
	\[
	\sum_{k=1}^N |<T^{n_k}x_k,x_k^*>| \leq \frac{2^mm!Ce}{\pi}\norme{(x_k)_{k \in \N_N}}_{\gamma(\N_N; X)}.
	\]
	Taking the supremum over $x_1^*, \ldots, x_N^*$, such that $\norme{(x^*_k)_{k \in \N_N}}_{\gamma'(\N_N; X^*)} \leq 1$, one obtains according to \eqref{dualxk}
	\[
	\norme{\big(T^{n_k}x_k\big)_{k \in \N_N}}_{\gamma(\N_N;X)} \leq \frac{2^mm!Ce}{\pi }\norme{(x_k)_{k \in \N_N}}_{\gamma(\N_N; X)}.
	\]
	Hence the set $\{T^n: n\in \N \}$ is $\gamma$-bounded.
\end{proof}
\begin{lem}\label{equigamGFS}
	Let $m\in \N$ and $T \in B(X)$ with $\sigma(T) \subset \Dbar$. Then $T$ has property $(\gamma\text{-}GFS)_m$  if and only if the set 
	\begin{equation}\label{gammaSet}
	\Big\{(r+1)(r-1)^m\displaystyle\int_{0}^{2\pi}\varepsilon(t)R(re^{it},T)^{m+1}dt \, \big|\,
	\varepsilon : [0,2\pi)\rightarrow \overline{\mathbb{D}}\text{ measurable},\, r> 1 \Big\}
	\end{equation}
	is $\gamma$-bounded.
\end{lem}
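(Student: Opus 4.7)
The plan is to exploit the duality formula \eqref{dualxk} together with the $\gamma$-H\"older inequality from Remark \ref{rqgamgam'} to translate between the two formulations. The key observation is that the quantity appearing on the left-hand side of $(\gamma\text{-}GFS)_m$ is, up to choice of signs, exactly a pairing of the form $\sum_k \langle S_k x_k, x_k^*\rangle$ with $S_k$ in the set \eqref{gammaSet}.

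For the direction ``set \eqref{gammaSet} is $\gamma$-bounded $\Rightarrow$ $(\gamma\text{-}GFS)_m$'', I fix data $r_1,\ldots,r_N>1$, $x_1,\ldots,x_N \in X$ and $x_1^*,\ldots,x_N^* \in X^*$. Using the trick already employed in $(iii)\Rightarrow(i)$ of Theorem \ref{th6.4}, I select measurable functions $\varepsilon_k : [0,2\pi) \to \{|z|=1\}$ linearizing the modulus:
\[
|\langle R(r_k e^{it},T)^{m+1}x_k,x_k^*\rangle| = \varepsilon_k(t)\langle R(r_k e^{it},T)^{m+1}x_k,x_k^*\rangle.
\]
Setting $S_k := (r_k+1)(r_k-1)^m\int_0^{2\pi}\varepsilon_k(t)R(r_ke^{it},T)^{m+1}\,dt$, the left-hand side of \eqref{gammagfs} rewrites as $\sum_k \langle S_k x_k, x_k^*\rangle$. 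Since $\{S_k\}\subset$ set \eqref{gammaSet}, the assumed $\gamma$-boundedness gives $\|(S_k x_k)\|_{\gamma(\N_N;X)} \leq C\|(x_k)\|_{\gamma(\N_N;X)}$, and the $\gamma$-H\"older inequality of Remark \ref{rqgamgam'} bounds the sum by $C\|(x_k)\|_{\gamma}\|(x_k^*)\|_{\gamma'}$, which is precisely \eqref{gammagfs}.

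For the converse direction, I fix $r_1,\ldots,r_N>1$, measurable $\varepsilon_k : [0,2\pi)\to\overline{\D}$, and $x_1,\ldots,x_N \in X$, and set $S_k$ as above. Invoking \eqref{dualxk}, I compute
\[
\|(S_k x_k)\|_{\gamma(\N_N;X)} = \sup\Big|\sum_{k=1}^N \langle S_k x_k, x_k^*\rangle\Big|
\]
where the supremum ranges over $(x_k^*)$ with $\|(x_k^*)\|_{\gamma'(\N_N;X^*)}\leq 1$. For any such $(x_k^*)$, moving the absolute value inside and using $|\varepsilon_k(t)|\leq 1$ yields
\[
\Big|\sum_k \langle S_k x_k, x_k^*\rangle\Big| \leq \sum_k (r_k+1)(r_k-1)^m\int_0^{2\pi}|\langle R(r_ke^{it},T)^{m+1}x_k,x_k^*\rangle|\,dt,
\]
which is bounded by $C\|(x_k)\|_\gamma \|(x_k^*)\|_{\gamma'} \leq C\|(x_k)\|_\gamma$ via $(\gamma\text{-}GFS)_m$. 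Taking the sup over $x_k^*$ gives $\gamma$-boundedness of the set.

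Both implications are essentially routine translations once the correct duality tools are in hand. I do not anticipate a substantive obstacle here; the only care needed is being precise that the set \eqref{gammaSet} consists of bona fide operators (i.e.\ that the Pettis-type integral defines a bounded operator, which follows since the integrand is a bounded continuous operator-valued function on the compact interval $[0,2\pi)$) and that the duality formula \eqref{dualxk} applies in the generality required.
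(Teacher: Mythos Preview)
Your proof is correct and follows essentially the same approach as the paper: both directions use the duality formula \eqref{dualxk} and the $\gamma$-H\"older inequality (Theorem \ref{inetrace}), together with the choice of measurable $\varepsilon_k$ to linearize the absolute value, exactly as you describe. The only difference is the order in which you present the two implications.
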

\begin{proof}
	Let $T \in B(X)$ with $(\gamma$-$GFS)_m$. Let $N \in \N$, $r_1, \ldots , r_N > 1$, 
	$x_1, \ldots , x_N \in X$ and $\varepsilon_k : [0,2\pi)\rightarrow \overline{\mathbb{D}}$ measurable. For $x_1^*, \ldots , x_N^* \in X^*$ with $\norme{(x_k^*)_{k \in \N_N}}_{\gamma'(\N_N; X^*)}\leq 1$ one has, using \eqref{gammagfs}, 
	\begin{align*}
	\Big|\sum_{k=1}^N\big\langle \Big(&(r_k+1)(r_k-1)^m\displaystyle\int_{0}^{2\pi}\varepsilon_k(t)R(r_ke^{it},T)^{m+1}dt\Big)x_k,x^*_k 	\big\rangle \Big| \\
	&\leq  C\norme{(x_k)_{k \in \N_N}}_{\gamma(\N_N; X)}. \nonumber
	\end{align*}
	Now take the supremum over $x_1^*, \ldots , x_N^* \in X^*$ one obtains by \eqref{dualxk}
	\begin{equation*}
	\norme{\Big((r_k+1)(r_k-1)^m\displaystyle\int_{0}^{2\pi}\varepsilon_k(t)R(r_ke^{it},T)^{m+1}dtx_k\Big)_{k \in \N_N}}_{\gamma(\N_N; X)} 
	\leq  C\norme{(x_k)_{k \in \N_N}}_{\gamma(\N_N; X)}. 
	\end{equation*}
	This means that the set in \eqref{gammaSet}	is $\gamma$-bounded.
	
	Suppose that the set in  \eqref{gammaSet}	is $\gamma$-bounded.  Let $N \in \N$, $r_1, \ldots , r_N > 1$, 
	$x_1, \ldots , x_N \in X$ and $x_1^*, \ldots , x_N^* \in X^*$. For $k \in \N_N$, let $\varepsilon_k : [0,2\pi) \rightarrow \overline{\D}$ measurable such that 
	\begin{align*}
	\int_{\R}|&\langle (r_k+1)(r_k-1)^m R(r_k e^{it},T)^{m+1}x_k,x_k^* \rangle| dt  \\
	&= \big\langle \big((r_k+1)(r_k-1)^m \int_{\R} \varepsilon_k(t)  R(r_k e^{it},T)^{m+1}dt\big )x_k,x_k^* \big\rangle .
	\end{align*}
	Then by Theorem \ref{inetrace} and the assumption,
	\begin{align*}
	\big\langle &\Big((r_k+1)(r_k-1)^m \int_{\R} \varepsilon_k(t)  R(r_k e^{it},T)^{m+1}dt\big )x_k,x_k^* \big\rangle  \\ &\leq \norme{\Big(\sum_{k=1}^N(r_k+1)(r_k-1)^m\displaystyle\int_{0}^{2\pi}\varepsilon_k(t)R(r_ke^{it},T)^{m+1}dtx_k\Big)_{k \in \N_N}}_{\gamma(\N_N; X)} \norme{(x^*_k)_{k \in \N_N}}_{\gamma'(\N_N; X^*)}.
	\\& \leq C \norme{(x_k)_{k \in \N_N}}_{\gamma(\N_N; X)}\norme{(x^*_k)_{k \in \N_N}}_{\gamma'(\N_N; X^*)}.
	\end{align*}
\end{proof}

\begin{prop}\label{propPm}
	Let $T \in B(X)$ with $(GFS)_m$. If $T$ has $(\gamma$-$GFS)_m$ then $T$ has $(\gamma$-$GFS)_p$ for $ 1 \leq p \leq m$. 
\end{prop}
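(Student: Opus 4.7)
The plan is to mimic the downward induction used in Proposition \ref{propGFSmimpGF1}, but at the level of the $\gamma$-bounded sets provided by Lemma \ref{equigamGFS}. It suffices to prove that if $m \geq 2$ and $T$ has $(\gamma\text{-}GFS)_m$, then $T$ has $(\gamma\text{-}GFS)_{m-1}$; iterating this step downwards from $m$ to $1$ will yield the full statement. By Lemma \ref{equigamGFS}, property $(\gamma\text{-}GFS)_k$ is equivalent to the $\gamma$-boundedness of the set
\[
A_k := \Big\{ (r+1)(r-1)^k \int_0^{2\pi}\varepsilon(t)\, R(re^{it},T)^{k+1}\, dt\ :\ \varepsilon:[0,2\pi)\to \overline{\D}\text{ measurable},\ r>1 \Big\},
\]
so I would assume $A_m$ is $\gamma$-bounded and work to show $A_{m-1}$ is $\gamma$-bounded.

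First I would establish the operator identity
\[
R(re^{it},T)^m = m\int_r^{+\infty} e^{it}\, R(ue^{it},T)^{m+1}\, du
\]
as a Bochner integral in $B(X)$, coming from integrating $\frac{d}{du}R(ue^{it},T)^m = -me^{it}R(ue^{it},T)^{m+1}$ on $[r,+\infty)$; convergence at infinity is ensured because $(GFS)_m$ forces $T$ to be power bounded (Proposition \ref{gfsimppowbounded}), so that $\norme{R(ue^{it},T)^m}$ decays at least like $(u-1)^{-m}$. Inserting this identity and applying Fubini then shows that for every measurable $\varepsilon:[0,2\pi)\to\overline{\D}$ and every $r>1$,
\[
(r+1)(r-1)^{m-1}\int_0^{2\pi}\varepsilon(t)\, R(re^{it},T)^m\, dt = m(r+1)(r-1)^{m-1}\int_r^{+\infty}\frac{V_u}{(u+1)(u-1)^m}\, du,
\]
where $V_u := (u+1)(u-1)^m \int_0^{2\pi} \widetilde\varepsilon(t)\, R(ue^{it},T)^{m+1}\, dt$ with $\widetilde\varepsilon(t) := e^{it}\varepsilon(t)$. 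Since $\widetilde\varepsilon$ is again $\overline{\D}$-valued and measurable, each $V_u$ lies in $A_m$.

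The direct estimate $\int_r^{+\infty}\frac{du}{(u+1)(u-1)^m} \leq \frac{1}{(r+1)(m-1)(r-1)^{m-1}}$ shows that the scalar prefactor $m(r+1)(r-1)^{m-1}\int_r^{+\infty}\frac{du}{(u+1)(u-1)^m}$ is bounded by $\frac{m}{m-1}$, uniformly in $r$. Consequently, every element of $A_{m-1}$ is a scalar of modulus at most $\frac{m}{m-1}$ times a probability-weighted integral of elements of $A_m$. Approximating this integral by Riemann sums in the strong operator topology places it in the strong closure of the absolutely convex hull of $A_m$. By Proposition \ref{propusalgamma}, that closed absolutely convex hull is $\gamma$-bounded with the same $\gamma$-bound as $A_m$, so $\gamma(A_{m-1}) \leq \frac{m}{m-1}\gamma(A_m) < \infty$, completing the inductive step.

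The main technical point to justify is this final approximation step, namely that a Pettis-type integral of operators drawn from a $\gamma$-bounded set $\mathcal T \subset B(X)$, weighted by a probability density, lies in the strong operator closure of the absolutely convex hull of $\mathcal T$; this is standard and reduces to approximating the density by simple functions. Everything else is a direct transcription of the scalar argument of Proposition \ref{propGFSmimpGF1} into the $\gamma$-framework.
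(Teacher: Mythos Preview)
Your proof is correct and follows essentially the same approach as the paper's: both reduce the step $(\gamma\text{-}GFS)_m \Rightarrow (\gamma\text{-}GFS)_{m-1}$ to Lemma \ref{equigamGFS}, express each element of $A_{m-1}$ as an $L^1$-weighted integral of elements of $A_m$ via the resolvent identity from the proof of Proposition \ref{propGFSmimpGF1}, and then use that $L^1$-averages of a $\gamma$-bounded family remain $\gamma$-bounded. The only cosmetic differences are that the paper performs the change of variable $u \mapsto ru$ (integrating over $(1,\infty)$ instead of $(r,\infty)$) and cites the ready-made result \cite[Theorem 8.5.2]{hnvw} for $L^1$-averages, whereas you spell this out via Proposition \ref{propusalgamma} and Riemann-sum approximation; your treatment of the $e^{it}$ factor (absorbed into $\widetilde\varepsilon$) and of the constant $\frac{m}{m-1}$ is in fact cleaner than the paper's printed version.
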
	

\begin{proof}
	We proceed by induction, showing that if $m\geq 2$, then  $(\gamma\text{-}GFS)_m$
	implies \\ $(\gamma\text{-}GFS)_{m-1}$.

	Suppose that $T$ has property $(\gamma\text{-}GFS)_{m}$, with $m\geq 2$. 
	Let $\varepsilon :  [0,2\pi) \rightarrow \overline{\mathbb{D}}$ be a measurable function
	and let $r > 1$. Then by the argument showing \eqref{eqpropmimp1} and a change of variable one has 
	\begin{align*}
	&(r+1)(r-1)^{m-1}\int_{0}^{2\pi} \varepsilon(t)R(re^{it},T)^{m} dt \\ & = r(m-1)\int_{1}^{+\infty}(r+1)(r-1)^{m-1}\int_{0}^{2\pi}\varepsilon(t) R(rue^{it},T)^{m+1} dtdu \\
	&= r(m-1)\int_{1}^{+\infty}\frac{(r+1)(r-1)^{m-1}}{(ru+1)(ru-1)^m}\int_{0}^{2\pi}(ru+1)(ru-1)^m\varepsilon(t) R(rue^{it},T)^{m+1} dtdu.
	\end{align*}
	
	Let $\mathcal{T} $ be the set (\ref{gammaSet}). By assumption and Lemma \ref{equigamGFS}, $\mathcal{T} $ is $\gamma$-bounded hence by
 \cite[Theorem 8.5.2.]{hnvw},
	the set
	\begin{align*}
	\Gamma := \big\{ \int_{1}^{\infty}& \phi(u)\int_{0}^{2\pi} (ru+1)(ru-1)^m 
	\varepsilon(t)R(	rue^{it},T)^{m+1} dt du\, \big|\, \\
	& \varepsilon : [0,2\pi) \rightarrow \overline{\mathbb{D}}\text{ measurable},\, r>  1,\,
	\phi\in L^1((1, \infty)),\, \norme{\phi}_{L^1} \leq 1 \big\}
	\end{align*}
	is $\gamma$-bounded. Since 
	\[
	\norme{r(m-1)\frac{(r+1)(r-1)^{m-1}}{(r\cdot +1)(r\cdot - 1)^{m}}}_{L^1((1,\infty))} \leq 1,
	\]
	the above calculation shows that the set
	\[
	\big\{(r+1)(r-1)^{m-1}\displaystyle\int_{0}^{2\pi}\varepsilon(t)R(re^{it},T)^{m}dt
	\, :\, \varepsilon : \R \rightarrow \overline{\mathbb{D}}\text{ measurable},\, r>1 \big\}  
	\]
	is included in $\Gamma$, hence is $\gamma$-bounded. Hence, by Lemma \ref{equigamGFS}, 
	the operator $T$ has property $(\gamma\text{-}GFS)_{m-1}$. 
\end{proof}

\subsection{\gdbfc{m}}

\begin{df}
	Let $T \in B(X)$ with $\sigma(T) \subset \Dbar$. Then $T$ is said to have \gdbfc{m} if the set 
	\[
	\big\{(r-1)^m\phi^{(m)}(T): r>1,\, \phi \in \Hinf{r},\, \norme{\phi}_{H^{\infty}(r\mathbb{D})} \leq 1 \big\}
	\]
	is $\gamma$-bounded.
\end{df}

We give now the $\gamma$-bounded version of Theorem \ref{th6.4}.
\begin{thm}\label{gamth6.4}
	Let $T \in B(X)$ with $\sigma(T) \subset \Dbar$. The following assertions are equivalent for $m \in \N$,
	\begin{enumerate}[label = (\roman*)]
		\item T has $(\gamma$-$GFS)_m$;
		\item T has \gdbfc{1};
		\item T has \gdbfc{m}.
	\end{enumerate}
\end{thm}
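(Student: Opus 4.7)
The plan is to follow the same three-step scheme as in Theorem \ref{th6.4}, but systematically replace norm estimates by $\gamma$-boundedness of suitable operator families. The bridge between $(\gamma\text{-}GFS)_k$ and such a family is Lemma \ref{equigamGFS}: it converts $(\gamma\text{-}GFS)_k$ into $\gamma$-boundedness of the set of operators $(s+1)(s-1)^k\int_0^{2\pi}\varepsilon(t)R(se^{it},T)^{k+1}\,dt$ (with $s>1$ and $\varepsilon:[0,2\pi)\to\overline{\D}$ measurable), while \gdbfc{k} is itself defined as $\gamma$-boundedness of a family. In each implication I will show that every operator in the ``target'' family equals a uniformly bounded scalar multiple of an operator from the ``source'' family; since $\gamma$-boundedness is preserved under uniformly bounded scalar multiplication, the result will follow.

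For $(i) \Rightarrow (ii)$: since $(\gamma\text{-}GFS)_m$ implies $(GFS)_m$, Proposition \ref{propPm} yields $(\gamma\text{-}GFS)_1$, and Lemma \ref{equigamGFS} converts this into $\gamma$-boundedness of the family $\mathcal{A}$ of operators $(\rho+1)(\rho-1)\int_0^{2\pi}\varepsilon(t)R(\rho e^{it},T)^2\,dt$. For $r>1$, $\phi\in\Hinf{r}$ with $\norme{\phi}_{\Hinf{r}}\leq 1$, and $\rho:=(r+1)/2$, set $\varepsilon(t):=e^{it}\phi(\rho e^{it})$, which has modulus at most $1$. The representation \eqref{mIPP} with $m=1$ then writes $(r-1)\phi'(T)=\lambda_{r}\,A$ with $A\in\mathcal{A}$ and $\lambda_r=\frac{\rho(r-1)}{2\pi(\rho+1)(\rho-1)}$ uniformly bounded in $r>1$, so the family $\{(r-1)\phi'(T)\}$ is $\gamma$-bounded.

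For $(ii) \Rightarrow (iii)$: with the same $\rho=(r+1)/2$, the Cauchy estimate \eqref{inecau} bounds $\norme{\phi^{(m-1)}}_{\Hinf{\rho}}$ by a constant multiple of $(r-1)^{-(m-1)}$. Applying \gdbfc{1} to the normalized function $\phi^{(m-1)}/\norme{\phi^{(m-1)}}_{\Hinf{\rho}}\in\Hinf{\rho}$ expresses $(r-1)^m\phi^{(m)}(T)$ as a uniformly bounded scalar multiple of an operator from the family witnessing \gdbfc{1}, hence \gdbfc{m}.

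For $(iii)\Rightarrow(i)$: I reproduce the construction used in Theorem \ref{th6.4}. For $r>1$, $\rho:=(r+1)/2$ and measurable $\varepsilon:[0,2\pi)\to\overline{\D}$, the function $\phi(z):=\frac{1}{(m+1)!}\int_0^{2\pi}\varepsilon(t)(re^{it}-z)^{-2}\,dt$ lies in $\Hinf{\rho}$ with $\norme{\phi}_{\Hinf{\rho}}$ controlled by \eqref{residu2}, and satisfies $\phi^{(m)}(T)=\int_0^{2\pi}\varepsilon(t)R(re^{it},T)^{m+2}\,dt$ by Fubini. Applying \gdbfc{m} to $\phi/\norme{\phi}_{\Hinf{\rho}}$ then shows that $(r+1)(r-1)^{m+1}\int_0^{2\pi}\varepsilon(t)R(re^{it},T)^{m+2}\,dt$ is a uniformly bounded scalar multiple of an element of the $\gamma$-bounded family witnessing \gdbfc{m}, so by Lemma \ref{equigamGFS} the operator $T$ has $(\gamma\text{-}GFS)_{m+1}$. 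The main bookkeeping subtlety lies here: the construction produces the wrong index. Since \gdbfc{m} also implies \dbfc{m}, Theorem \ref{th6.4} gives $(GFS)_{m+1}$, and Proposition \ref{propPm} applied to $m+1$ finally descends $(\gamma\text{-}GFS)_{m+1}$ down to $(\gamma\text{-}GFS)_m$.
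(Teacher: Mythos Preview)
Your proposal is correct and follows essentially the same three-step scheme as the paper. The only presentational difference is that you systematically route through Lemma \ref{equigamGFS} (turning each $(\gamma\text{-}GFS)_k$ condition into $\gamma$-boundedness of a family and then arguing by ``uniformly bounded scalar multiple of an operator in a $\gamma$-bounded set''), whereas the paper in $(i)\Rightarrow(ii)$ and $(iii)\Rightarrow(i)$ writes out the finite sums $\sum_{k=1}^N|\langle\cdots x_k,x_k^*\rangle|$ directly and appeals to \eqref{dualxk} and Theorem \ref{inetrace}; these are the same argument in different clothing, and the paper itself uses your set-inclusion style in $(ii)\Rightarrow(iii)$.
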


\begin{proof}
	$(i) \Rightarrow (ii)$: First, by Proposition \ref{propPm}, $T$ has $(\gamma\text{-}GFS)_{1}$.	Let $N \in \N$ and $r_1,\ldots,r_N > 1$, $\phi_1 \in \Hinf{r_1},\ldots,\phi_N\in \Hinf{r_N}$ with $\norme{\phi_k}_{\Hinf{r_k}}\leq 1$. Let 
	$x_1,\ldots,x_N \in X$ and $x_1^*,\ldots,x_N^* \in X^*$. Using \eqref{mIPP} and arguing as in the proof of $(i) \implies (ii)$ in Theorem \ref{th6.4}, one has
	\[
	\sum_{k=1}^N |\langle(r_k-1)\phi'_k(T)x_k,x^*_k\rangle| \leq   \frac{1}{2\pi}\sum_{k=1}^N \norme{\phi_k}_{\Hinf{r_k}}
	\int_{0}^{2\pi}\big|\langle(r_k^2 -1 )R(r_ke^{it},T)^{2}x_k,x^*_k\rangle\big|dt.  
	\]
	Suppose $\norme{(x_k^*)_{k \in \N_N}}_{\gamma'(\N_N; X^*)}\leq 1$. Since $\norme{\phi_k}_{\Hinf{r_k}} \leq 1$ and $T$ has $(\gamma\text{-}GFS)_{1}$, one obtains 
		\[
	\sum_{k=1}^N |\langle(r_k-1)\phi'_k(T)x_k,x^*_k\rangle| \leq  \frac{C}{2\pi}\norme{(x_k)_{k \in \N_N}}_{\gamma(\N_N; X)}.
	\]
	Taking the supremum over $x_1^*,\ldots ,x_N^*$ this implies, thanks to \eqref{dualxk}, that $T$ has \gdbfc{1}.
	

	\smallskip
	$(ii) \Rightarrow (iii)$: It follows from the assumption that the set 
	\[
	\Delta := \Big\{(\rho-1)\phi^{(m)}(T): 1<\rho<r , \,  \phi \in \Hinf{r}, \; \norme{\phi^{(m-1)} }_{\Hinf{\rho}} \leq 1 \Big\}
	\]
	is $\gamma$-bounded. For any $1 <\rho < r$ and $\phi \in \Hinf{r}$
	with $\norme{\phi}_{\Hinf{r}} \leq 1$, by Lemma \ref{lemcauchyine}  one has
	\[
	\norme{\frac{(r-\rho)^{m-1}}{(m-1)!}\phi^{(m-1)}}_{\Hinf{\rho}} \leq 1.
\]
Hence
	\[
	\bigg\{\frac{(\rho - 1)
		(r - \rho)^{m-1}}{(m-1)!}\phi^{(m)}(T): 1<\rho<r, \,\phi \in  \Hinf{r}, \;
	\norme{\phi}_{\Hinf{r}}\leq 1 \bigg\} \subset \Delta. 
	\]
	Taking $\rho = \frac{r+1}{2}$ in the above set, we obtain 
	\[
\bigg\{\frac{(r-1)^m}{2^m (m-1)!}\phi^{(m)}(T)
 : r>1, \, \phi \in  \Hinf{r}, \;
	\norme{\phi}_{\Hinf{r}}\leq 1 \bigg\} \subset \Delta.
	\]
	Hence the above set is $\gamma$-bounded. Thus $T$ has \gdbfc{m}.

	
	\smallskip
	$(iii) \Rightarrow (i)$: Let $r_1,\ldots,r_N> 1$, $1<\rho_1 < r_1, \ldots, 1<\rho_N < r_N$,
	$x_1, \ldots, x_N \in X$ and  $x^*_1,\ldots, x^*_N \in X^{*}$. Let us introduce measurable functions
	$\varepsilon_1, ..., \varepsilon_N : [0,2\pi) \to \overline{\mathbb{D}}$ such that  
			\[
	|\langle R((r_k)e^{it},T)^{m+2}x_k,x_k^*\rangle | = \varepsilon_k(t)\langle R((r_k)e^{it},T)^{m+2}x_k,x_k^*\rangle .
	\]
	for all $k=1,\ldots, N$ and all $t \in [0,2\pi)$. Next we set
	\[
	\phi_{k}(z) := \frac{(r_k+\rho_k)(r_k-\rho_k)}{2\pi}\int_{0}^{2\pi}\frac{\varepsilon_k(t)}{(r_ke^{it}-z)^2}dt, \quad z\in r_k\D.
	\]
	The computations in the proof of Theorem \ref{th6.4} show that $\norme{\phi_k}_{\Hinf{\rho_k}} \leq 1$ and
	\[
	\phi_k^{(m)}(T) =\frac{(m+1)!(r_k+\rho_k)(r_k-\rho_k)}{2\pi}\int_{0}^{2\pi}\varepsilon_k(t)R(r_ke^{it},T)^{m+2}dt.
	\]
	Therefore it follows from $(iii)$ that we have the estimate
	\begin{align*}
	\sum_{k=1}^{N} & \frac{(m+1)!(\rho_k -1)^m(r_k-\rho_k)(r_k+1)}{2\pi}\int_{0}^{2\pi}|\langle  R(r_ke^{it},T)^{m+2}x_k,x^*_k \rangle|dt \\
	& \leq \sum_{k=1}^{N}   \frac{(m+1)!(\rho_k -1)^m(r_k-\rho_k)(r_k+\rho_k)}{2\pi}\int_{0}^{2\pi}\varepsilon_k(t)\langle R(r_ke^{it},T)^{m+2}x_k,x^*_k \rangle dt  \\
	&= \sum_{k=1}^{N}  \big|\langle (\rho_k -1)^m\phi_{k}^{(m)}(T) x_k,x^*_k \rangle\big|dt \\
	& \leq C\norme{(x_k)_{k \in \N_N}}_{\gamma(\N_N; X)}\norme{(x^*_k)_{k \in \N_N}}_{\gamma'(\N_N; X^*)}.
	\end{align*}
	The last inequality comes from Theorem \ref{inetrace}. Now we choose $\rho_k =  \frac{r_k + 1}{2}$ in the above estimate.
	We obtain the following inequality
	\begin{align*}
	&\sum_{k=1}^{N} \int_{\R}|\langle (r_k+1)(r_k-1)^{m+1}
	R(r_ke^{it},T)^{m+2}x_k,x^*_k \rangle|dt \\
	&\leq \frac{2^{m+2}\pi C}{(m+1)!}\norme{(x_k)_{k \in \N_N}}_{\gamma(\N_N; X)}
	\norme{(x^*_k)_{k \in \N_N}}_{\gamma'(\N_N; X^*)}.
	\end{align*}	
	This shows that $T$ has $(\gamma\text{-}GFS)_{m+1}$.
	Then by Proposition \ref{propPm}, $T$ has $(\gamma\text{-}GFS)_{m}$. 
\end{proof}
\begin{df}
	Let $T \in B(X)$. If $T$ satisfies one of the three conditions of Theorem \ref{gamth6.4} then we will say that $T$ is a $\gamma$-GFS operator. 
\end{df}

\subsection{Characterization of power $\gamma$-bounded on Banach space $X$}

In the following, the space $L^2((0,2\pi))$ will be equipped with the norm
\[
\norme{f}^2_2 = \frac{1}{2\pi}\int_{0}^{2\pi}|f(s)|^2ds, \quad f \in  L^2((0,2\pi)).
\]
Therefore, the Fourier-Parseval operator 
\[\begin{array}{ccccc}
\mathcal{F} & : & L^2([0,2\pi)) & \to & l^2_\Z \\
& & f & \mapsto & (c_n(f))_{n\in \Z}, \\
\end{array}\] is an isometry. Here $c_n(f)$ is the $n$-th Fourier coefficient defined by 
\[
c_n(f) = \frac{1}{2\pi} \int_{0}^{2\pi}f(t)e^{-int}dt.
\]	
We give a characterization of power $\gamma$-bounded operators.
\begin{thm}\label{shifengRdisthm}
	Let $X$ be a Banach space. Following assertions are equivalent : 
	\begin{enumerate}[label = (\roman*)]
		\item The operator $T$ is power $\gamma$-bounded;
		\item The spectrum set $\sigma(T)$ is included in $\Dbar$ and there exists a constant $C>0$
		such that for all $N\in \N$, $r_1,\ldots, r_N >1$, $x_1,\ldots,x_N \in X$ and
		$x^*_1,\ldots,x^*_N \in X^*$ the functions $(t,k) \mapsto \sqrt{r_k^2-1}R(r_ke^{it}, T)x_k$
		and  $(t,k) \mapsto \sqrt{r_k^2-1}R(r_ke^{it}, T^*)x^*_k$ are in
		$\gamma([0,2\pi)\times\N_N;X)$ and $\gamma'([0,2\pi)\times\N_N;X^*)$ respectively, and
		satisfy
		\begin{equation}\label{ine1dis}
		\norme{(t,k) \mapsto \sqrt{r_k^2-1}R(r_ke^{it}, T)x_k}_{\gamma([0,2\pi)\times\N_N;X)} \leq C\norme{(x_k)_{k \in \N_N}}_{\gamma(\N_N; X)}
		\end{equation}
		and 
		\begin{equation}\label{ine1dis*}
		\norme{(t,k) \mapsto \sqrt{r_k^2-1}R(r_ke^{it}, T^*)x^*_k}_{\gamma'([0,2\pi)\times\N_N;X^*)} \leq \norme{(x_k^*)_{k \in \N_N}}_{\gamma'(\N_N; X^*)};
		\end{equation}
		\item $T$ is a $\gamma$-GFS operator.
	\end{enumerate}
\end{thm}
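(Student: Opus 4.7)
The plan is to establish the cycle $(iii) \Rightarrow (i) \Rightarrow (ii) \Rightarrow (iii)$. The implication $(iii) \Rightarrow (i)$ is exactly Proposition \ref{gamgfsimplgambound}, so only the remaining two implications need genuine work.

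For $(i) \Rightarrow (ii)$, the starting point is the Neumann expansion $R(re^{it},T) = \sum_{n \geq 0} r^{-(n+1)} e^{-i(n+1)t}\, T^n$, valid for $r>1$ since $\sigma(T) \subset \Dbar$. This identifies the Fourier coefficients of $t \mapsto \sqrt{r^2-1}\,R(re^{it},T)x$ as $\sqrt{r^2-1}\,r^{-(n+1)} T^n x$ at index $-(n+1)$ and zero otherwise. Applying Lemma \ref{isofourierGamma} to the Fourier--Parseval isometry $\mathcal{F}$ (tensored trivially over the discrete index $k \in \N_N$) transforms the $\gamma$-norm on $[0,2\pi) \times \N_N$ into a $\gamma$-norm on the Fourier coefficients; only non-negative powers of $T$ survive. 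The $\gamma$-boundedness of $\{T^n : n \geq 0\}$ then lets us pull out $T^n$ with constant $C_\gamma$, leaving $\|(n,k) \mapsto \frac{\sqrt{r_k^2-1}}{r_k^{n+1}} x_k\|_\gamma$. For each fixed $k$, the scalars satisfy $\sum_{n \geq 0} (r_k^2-1)/r_k^{2(n+1)} = 1$, so a Gaussian contraction over $n$ reduces the remaining norm to $\|(x_k)_k\|_{\gamma(\N_N; X)}$, yielding \eqref{ine1dis}. Inequality \eqref{ine1dis*} follows from the same chain of arguments applied to $T^*$, once one observes that power $\gamma$-boundedness of $T$ forces $\{(T^*)^n\}$ to be $\gamma'$-bounded, via a duality argument built on \eqref{dualxk}, \eqref{dualxk*} and the $\gamma'$-version of Lemma \ref{isofourierGamma}.

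For $(ii) \Rightarrow (iii)$, I start from the factorisation $\langle R(re^{it},T)^2 x, x^*\rangle = \langle R(re^{it},T)x,\,R(re^{-it},T^*)x^*\rangle$, using $R(z,T)^* = R(\bar z, T^*)$. Splitting $(r_k+1)(r_k-1) = \sqrt{r_k^2-1}\cdot\sqrt{r_k^2-1}$ symmetrically, I apply the $\gamma$-H\"older inequality of Theorem \ref{inetrace} on the product space $[0,2\pi) \times \N_N$. The change of variable $t \mapsto -t$, being an isometry on $L^2([0,2\pi))$, extends through Lemma \ref{isofourierGamma} to an isometry on the relevant $\gamma'$-space, which aligns the second factor $R(r_k e^{-it}, T^*)x_k^*$ with the form appearing in \eqref{ine1dis*}. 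Combining these bounds then yields
\[
\sum_{k=1}^N \int_0^{2\pi}(r_k+1)(r_k-1)\,|\langle R(r_ke^{it},T)^2 x_k, x_k^*\rangle|\,dt \leq C^2 \|(x_k)_k\|_{\gamma(\N_N; X)}\|(x_k^*)_k\|_{\gamma'(\N_N; X^*)},
\]
which is precisely property $(\gamma\text{-}GFS)_1$, so $T$ is a $\gamma$-GFS operator by Theorem \ref{gamth6.4}.

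The main obstacle I expect lies in $(i) \Rightarrow (ii)$: making the Fourier--$\gamma$-norm correspondence on the product $[0,2\pi) \times \N_N$ fully rigorous, and in particular transferring $\gamma$-boundedness of $\{T^n\}$ to the $\gamma'$-boundedness of $\{(T^*)^n\}$ required for \eqref{ine1dis*}. This hinges on orchestrating both the $\gamma$- and $\gamma'$-versions of Lemma \ref{isofourierGamma} with the duality formulas \eqref{dualxk}--\eqref{dualxk*}, while simultaneously handling the product structure of the index set and the Gaussian contraction over the Neumann index $n$.
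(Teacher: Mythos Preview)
Your proposal is correct and follows essentially the same route as the paper: the cycle $(iii)\Rightarrow(i)\Rightarrow(ii)\Rightarrow(iii)$, the Neumann expansion combined with the Fourier--Parseval isometry via Lemma \ref{isofourierGamma}, the $\ell^2$-normalisation $\sum_{n\geq 0}(r_k^2-1)/r_k^{2(n+1)}=1$, and the $\gamma$-H\"older inequality for $(ii)\Rightarrow(iii)$ all match. Your handling of the conjugation $R(re^{it},T)^* = R(re^{-it},T^*)$ via the $t\mapsto -t$ isometry is in fact more explicit than the paper's, and your description of the dual estimate \eqref{ine1dis*} as a transfer of $\gamma$-boundedness through the duality formulas \eqref{dualxk}--\eqref{dualxk*} is exactly what the paper carries out.
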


\begin{proof}
$(iii)\implies(i)$ is Proposition \ref{gamgfsimplgambound}.

	$(i) \implies (ii)$ : Recall that for each $k \in \N_N$ one has
	\[
	R(r_ke^{it},T)x_k =  \sum_{n=0}^{\infty} \frac{T^{n}x_k}{(r_ke^{it})^{n+1}} =  \sum_{n=0}^{\infty}  \frac{T^{n}x_k}{r_k^{n+1}}e^{-i(n+1)t}.
	\]
	We apply Lemma \ref{isofourierGamma} with $H = l^2_{N}\overset{2}{\otimes} L^2((0,2\pi))$, $K =  l^2_{N}\overset{2}{\otimes} l^2_{\Z}$ and $U : H \rightarrow K$ defined by
	\[
	U\big(\sum_{k=1}^N e_k \otimes f_k\big) = \sum_{k=1}^N e_k \otimes (c_n (f_k))_{n\in \Z}, \quad f_1,\ldots,f_N \in L^2((0,2\pi)). 
	\]
We obtain that $(n,k) \mapsto \sqrt{r_k^2-1}\frac{T^{n}x_k}{r_k^{n+1}}$ belongs to $\gamma(\N\cup\{0\}\times\N_N;X)$ and
		\[
		\norme{(t,k) \mapsto \sqrt{r_k^2-1}R(r_ke^{it}, T)x_k}_{\gamma([0,2\pi)\times\N_N;X)}   =	\norme{(n,k) \mapsto  \sqrt{r_k^2-1}\frac{T^{n}x_k}{r_k^{n+1}}}_{\gamma(\N\cup\{0\}\times\N_N;X)}.
	\]
But, one has by \eqref{GXeqgam}
\[
	\norme{(n,k) \mapsto \sqrt{r_k^2-1} \frac{T^{n}x_k}{r_k^{n+1}}}_{\gamma(\N\cup\{0\}\times\N_N;X)} 
	= \norme{\sum_{k=1}^{N} \sqrt{r_k^2-1}\sum_{n=0}^{\infty}\frac{1}{r_k^{n+1}}\gamma_{k,n} \otimes   T^nx_k}_{G(X)},
\]
where $(\gamma_{k,n})_{k\in \N_N, n\in \N\cup\{0\}}$ is a family of independant complex valued standard Gaussian variables.
Using the $\gamma$-boundedness of $\{T^n: n\in \N\}$ it follows
\[
\norme{\sum_{k=1}^{N} \sqrt{r_k^2-1}\sum_{n=0}^{\infty}\frac{1}{r_k^{n+1}}\gamma_{k,n} \otimes   T^nx_k}_{G(X)}\leq C_{\gamma}\norme{\sum_{k=1}^{N} \sqrt{r_k^2-1}\sum_{n=0}^{\infty}\frac{1}{r_k^{n+1}}\gamma_{k,n} \otimes   x_k}_{G(X)}. 
\]
Furthermore $\norme{\Bigg(\sqrt{r_k^2-1}\frac{1}{r_k^{n+1}}\Bigg)_{n \in \N \cup \{0\}}}_{l^2} =  1$ for each $k \in \N_N$ therefore 
\[\Big(\sum_{n=0}^{\infty}\sqrt{r_k^2-1}\frac{1}{r_k^{n+1}}\gamma_{k,n}\Big)_{k\in \N_N} 
\]
is a sequence of independent standard Gaussian variables, which implies
\[
\norme{\sum_{k=1}^{N} \sqrt{r_k^2-1}\sum_{n=0}^{\infty}\frac{1}{r_k^{n+1}}\gamma_{k,n} \otimes   x_k}_{G(X)} = \norme{(x_k)_{k\in \N_N}}_{\gamma(\N_N;X)}.
\]
Hence one obtains \eqref{ine1dis}.

Likewise, using Lemma \ref{isofourierGamma} we obtain $(n,k) \mapsto \sqrt{r_k^2-1}\frac{{T^*}^{n}x_k}{r_k^{n+1}}$ belongs to $\gamma'(\N\cup\{0\}\times\N_N;X^*)$ and
\[
\norme{(t,k) \mapsto \sqrt{r_k^2-1}R(r_ke^{it}, T^*)x^*_k}_{\gamma'([0,2\pi)\times\N_N;X^*)}   =	\norme{(n,k) \mapsto  \sqrt{r_k^2-1}\frac{{T^*}^{n}x^*_k}{r_k^{n+1}}}_{\gamma'(\N\cup\{0\}\times\N_N;X^*)}.
\] 
Furthermore, one has by \eqref{limitxk*}
\[
\norme{(n,k) \mapsto  \sqrt{r_k^2-1}\frac{{T^*}^{n}x^*_k}{r_k^{n+1}}}_{\gamma'(\N\cup\{0\}\times\N_N;X^*)} = \lim\limits_{M \rightarrow \infty}\norme{(n,k) \mapsto  \sqrt{r_k^2-1}\frac{{T^*}^{n}x^*_k}{r_k^{n+1}}}_{\gamma'(\N_M\cup\{0\}\times\N_N;X^*)}.
\] 
But, by \eqref{dualxk*}
\begin{align} \label{proofthmgamgfs}
&\norme{(n,k) \mapsto  \sqrt{r_k^2-1}\frac{{T^*}^{n}x^*_k}{r_k^{n+1}}}_{\gamma'(\N_M\cup\{0\}\times\N_N;X^*)} \\
&= \sup \Big\{ \Big|\sum_{n = 1 }^M \sum_{k=1}^N \big\langle x_{n,k}, \sqrt{r_k^2-1}\frac{{T^*}^{n}x^*_k}{r_k^{n+1}}\big\rangle \Big| : \norme{(x_{n,k})_{n \in \N_M\cup \{0\}, k \in \N_N}}_{\gamma(\N_M\cup \{0\} \times \N_N,X)}\leq 1 \Big\} \nonumber,
\end{align}
and for $M\in \N$, using Theorem \ref{inetrace},
\begin{align*}
\Big|\sum_{n = 1 }^M \sum_{k=1}^N \langle x_{n,k}, \sqrt{r_k^2-1}\frac{{T^*}^{n}x^*_{k}}{r_k^{n+1}}\rangle \Big| & = \Big|\sum_{n = 1 }^M \sum_{k=1}^N \langle \sqrt{r_k^2-1}\frac{{T}^{n}x_{n,k}}{r_k^{n+1}}, x_k^*\rangle \Big| \\
& \leq \norme{\sqrt{r_k^2-1}\frac{{T}^{n}x_{n,k}}{r_k^{n+1}}}_{{\gamma(\N_M\cup \{0\} \times \N_N,X)}}\norme{(x_k^*)_{k \in \N_N}}_{\gamma'(\N_N; X^*)}.
\end{align*}
By previous computations which leads to \eqref{ine1dis}, for each $M \in \N$ one has
\[
\norme{\sqrt{r_k^2-1}\frac{{T}^{n}x_{n,k}}{r_k^{n+1}}}_{\gamma(\N_M\cup \{0\}\times \N_N;X)} \leq C_{\gamma}\norme{(x_k)_{k \in \N_N}}_{\gamma(\N_N; X)}.
\] 
This implies 
\[
\norme{(n,k) \mapsto  \sqrt{r_k^2-1}\frac{{T^*}^{n}x^*_k}{r_k^{n+1}}}_{\gamma'(\N_M\cup\{0\}\times\N_N;X^*)} \leq C_{\gamma}\norme{(x_k^*)_{k \in \N_N}}_{\gamma'(\N_N; X^*)}.
\]
Hence one obtains \eqref{ine1dis*}. \\
$(ii) \implies (iii)$ : Let $N\in \N$, $r_1,\ldots, r_N >1$, $x_1,\ldots,x_N \in X$ and $x^*_1,\ldots,x^*_N \in X^*$. One has
\begin{align*}
\sum_{k=1}^{N}&\int_{\R}|\langle (r_k^2-1) R(r_k e^{it},T)^{2}x_k,x_k^* \rangle| dt \\
& = \sum_{k=1}^{N}\int_{\R}|\langle (r_k^2-1) R(r_k e^{it},T)x_k,R(r_k e^{it},T)^*x_k^* \rangle|dt\\
& = \norme{(t,k) \mapsto  \langle \sqrt{r_k^2-1}R(r_k e^{it},T)x_k,\sqrt{r_k-1}R(r_k e^{it},T)^*x_k^* \rangle}_{L^1((0,2\pi)\times \N_N)}. 
\end{align*}
By Theorem \ref{inetrace} and assumptions \eqref{ine1dis} and \eqref{ine1dis*},
\begin{align*}
&\norme{(t,k) \mapsto   \langle \sqrt{r_k^2-1}R(r_k e^{it},T)x_k,\sqrt{r_k^2-1}R(r_k e^{it},T)^*x_k^* \rangle}_{L^1((0,2\pi)\times \N_N)}  \\
 & \leq \norme{\sqrt{r_k^2-1} R(r_k e^{it},T)x_k}_{\gamma((0,2\pi)\times \N_N;X)}\norme{\sqrt{r_k^2-1}R(r_k e^{it},T)^*x_k^*}_{\gamma'((0,2\pi)\times \N_N;X^*)} \\
& \leq C^2\norme{(x_k)_{k \in \N_N}}_{\gamma(\N_N; X)}\norme{(x_k)_{k \in \N_N}}_{\gamma'(\N_N; X^*)}.
\end{align*}
Hence $T$ has $(\gamma$-$GFS)_1$ and therefore it is a $\gamma$-GFS operator.
\end{proof}

\begin{rq1}
When $X$ is $K$-convex \eqref{ine1dis*} can be replaced by 
\[
\norme{(t,k) \mapsto \sqrt{r_k^2-1}R(r_ke^{it}, T^*)x^*_k}_{\gamma([0,2\pi)\times\N_N;X^*)} \leq \norme{(x_k^*)_{k \in \N_N}}_{\gamma(\N_N; X^*)}.
\]
Moreover let $(S,\mu)$ be a measure space and let $E(S)$ be a $K$-convex
	Banach function space over $(S,\mu)$
	(see \cite[appendix F]{hnvw} for definition). Then 
	$E(S)$ has finite cotype hence
	according to \cite[Proposition 9.3.8]{hnvw}, there exist $c>0$ and $C>0$ such 
	that for each $f\in \gamma([0,2\pi)\times \N_N; E(S))$,
	\begin{equation}\label{Lattice}
	c\norme{f}_{E(S;L^2([0,2\pi)\times \N_N))} \leq \norme{f}_{\gamma([0,2\pi)\times \N_N;E(S))} \leq C \norme{f}_{E(S;L^2([0,2\pi)\times \N_N))}.
	\end{equation}
	Furthermore, the following equality holds,
	\[
	\norme{f}_{E(S;L^2([0,2\pi)\times \N_N))} = \norme{\Big(\int_{0}^{2\pi} \sum_{k=1}^N|f(\cdot,k)|^2\Big)^{\frac{1}{2}}}_{E(S)}.
	\]
	The space $E(S)^*$ satisfies similar properties.
	Hence using (\ref{Lattice}) and the Khintchine-Maurey inequality 
	\cite[Theorem 7.2.13]{hnvw}, $(ii)$ in Theorem \ref{shifengRdisthm} can be replaced by:
	\begin{itemize}
		\item [$(ii)'$] The spectrum set $\sigma(T)$ is included in $\Dbar$ and  there exists a constant $C>0$ such that for all $N\in \N$, $r_1,\ldots, r_N >1$, $x_1,\ldots,x_N \in X$ and $x^*_1,\ldots,x^*_N \in X^*$ 
		\begin{equation*}
		\norme{\Big(\sum_{k=1}^{N}\int_{0}^{2\pi}(r_k^2-1)|R(r_ke^{it}, T)x_k|^2dt \Big)^{\frac{1}{2}}}_{E(S)} 
		\leq C\norme{\big(\sum_{k=1}^{N}|x_k|^2\big)^{\frac{1}{2}}}_{E(S)}
		\end{equation*}
		and 
		\begin{equation*}
\norme{\Big(\sum_{k=1}^{N}\int_{0}^{2\pi}(r_k^2-1)|R(r_ke^{it}, T^*)x^*_k|^2dt \Big)^{\frac{1}{2}}}_{E(S)^*} 
\leq C\norme{\big(\sum_{k=1}^{N}|x^*_k|^2\big)^{\frac{1}{2}}}_{E(S)^*}.
\end{equation*}
	\end{itemize}
	Thus $T \in B(E(S))$ is power $\gamma$-bounded
	if and only if $(ii)'$ holds true.
	
	In particular the above applies when $E(S)=L^p(S)$ for some $1<p<\infty$.
\end{rq1}

\subsection{Peller calculus}
We denote by $H^p$ the classical Hardy space over the unit disk. Let $\A(\D)$ be the set of all function $F : \overline{\D} \rightarrow \C$ such that there exist two sequences $(f_k)_{k\in \N}$ of $C(\T)$  and $(h_k)_{k\in \N}$  in $H^1$ satisfying
\begin{equation}\label{ineA}
\sum_{k=1}^{\infty} \normeinf{f_k}\norme{h_k}_1 < \infty
\end{equation} 
and
\begin{equation}\label{equA}
\forall z \in \overline{\D }, \quad F(z) = \sum_{k=1}^{\infty} f_k\star h_k(z).
\end{equation} 
We endow $\A(\D)$ with the norm
\[
\norme{F}_{\A} = \inf \big\{\sum_{k=1}^{\infty} \normeinf{f_k}\norme{h_k}_1 \big\}
\]	
where the infimum runs over all sequences  $(f_k)_{k\in \N}$ of $C(\T)$  and $(h_k)_{k\in \N}$ of $H^1$ satifsying \eqref{ineA} and \eqref{equA}. It is known (see \cite[Lemma 3.6.]{pel1}) that with this norm $\A(\D)$ is a Banach algebra for pointwise multiplication.

Let $H$ be a Hilbert space and let $T \in B(H)$ be a power-bounded operator. According to \cite[Theorem 3.5.]{pel1} or \cite[Proposition 4.11]{pis1} there exists $C>0$ such that for each polynomial $P$ one has
\[
\norme{P(T)} \leq C \norme{P}_{\A}.
\]
Now since the set of polynomial is dense in $\A(\D)$ the bounded algebra homomorphism $P \rightarrow P(T)$ extends to a bounded algebra homomorphism from $\A(\D)$ into $B(H)$. Our aim is now to give a $\gamma$-version of this result.

\begin{thm}\label{pispelgamma}
	Let $X$ be a Banach space. Let $T \in B(X)$ be a power $\gamma$-bounded operator. Then the set 
	\[	
	\{P(T): P \text{ is a polynomial with } \norme{P}_{\A}\leq 1 \}
	\]
	is $\gamma$-bounded.
\end{thm}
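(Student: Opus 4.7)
The plan is in two stages. First I reduce the problem to showing that the ``elementary'' set
\[
\mathcal{E} := \bigl\{(f \star h)(T)\,:\, f\in C(\T),\ h\in H^1,\ \normeinf{f}\norme{h}_1\leq 1\bigr\}
\]
is $\gamma$-bounded. Indeed, for any polynomial $P$ with $\norme{P}_\A\leq 1$, the definition of the $\A$-norm yields a decomposition $P=\sum_j f_j\star h_j$ with $\sum_j\normeinf{f_j}\norme{h_j}_1\leq 1+\varepsilon$, so $P(T)$ lies in $(1+\varepsilon)$ times the closed absolute convex hull of $\mathcal{E}$ in the strong operator topology. Proposition \ref{propusalgamma} transfers the $\gamma$-bound to this hull and, letting $\varepsilon\to 0$, to the set in the statement.

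The second stage is the $\gamma$-version of Peller's Nehari--factorization argument. Given $f_k,h_k,x_k,x_k^*$ for $k\in\N_N$ with $\normeinf{f_k}\norme{h_k}_1\leq 1$, I use the classical factorization $H^1=H^2\cdot H^2$ to write $h_k=g_k m_k$ with $\norme{g_k}_2=\norme{m_k}_2=\norme{h_k}_1^{1/2}$. Then $\hat h_k(n)=\sum_{p+q=n}\hat g_k(p)\hat m_k(q)$, and splitting $\langle T^{p+q}x_k,x_k^*\rangle=\langle T^q x_k,T^{*p}x_k^*\rangle$ recasts the total sum as
\[
\sum_{k=1}^N\langle(f_k\star h_k)(T)x_k,x_k^*\rangle \;=\; \sum_{k,p,q}\hat f_k(p+q)\,\langle y_{k,q},z_{k,p}\rangle
\]
with $y_{k,q}:=\hat m_k(q)T^q x_k$ and $z_{k,p}:=\hat g_k(p)T^{*p}x_k^*$. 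The right-hand side is the $\gamma$-$\gamma'$ pairing on $\N_N\times\N$ between $z$ and $Ay$, where $A$ acts block-diagonally in $k$ with $k$-th block the Hankel matrix $(\hat f_k(p+q))_{p,q\geq 0}$.

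Two estimates are then needed. For $A$, Nehari's theorem bounds each Hankel block by $\normeinf{f_k}$, so the block-diagonal $A$ has $\ell^2$-norm $\max_k\normeinf{f_k}\leq 1$; Lemma \ref{isofourierGamma} lifts this to the same bound on $\gamma(\N_N\times\N;X)$, and Theorem \ref{inetrace} yields the pairing estimate. For $\norme{(y_{k,q})}_\gamma$, I reindex the family $(T^q)_{(k,q)}$ and apply power $\gamma$-boundedness of $\{T^n\}$ to pull out $T^q$; the remaining Gaussian sums $\sum_q\gamma_{k,q}\hat m_k(q)$ are, for each $k$, centered of variance $\norme{m_k}_2^2$ and independent across $k$, giving $\norme{(y_{k,q})}_\gamma \leq C_\gamma \max_k\norme{m_k}_2\,\norme{(x_k)}_\gamma$. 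A symmetric argument (using that the $\gamma$-bound of $\{T^n\}$ dualizes to a $\gamma'$-bound of $\{T^{*n}\}$) handles $\norme{(z_{k,p})}_{\gamma'}$. Since $\norme{g_k}_2\norme{m_k}_2=\norme{h_k}_1\leq 1$, combining these three inequalities and invoking \eqref{dualxk} produces $\gamma(\mathcal{E})\leq C_\gamma^2$.

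The main technical obstacle is the block-Hankel norm computation: only if the blockwise Nehari estimate yields $\max_k\normeinf{f_k}$ (the $\ell^2$-norm of a block-diagonal operator) rather than $\sum_k$ or some $N$-dependent quantity does the estimate remain uniform in $N$, as $\gamma$-boundedness requires. A secondary delicate point is the ``diagonal Gaussian'' identity reducing $\norme{(\hat m_k(q)x_k)_{k,q}}_\gamma$ to $\max_k\norme{m_k}_2\,\norme{(x_k)}_\gamma$, which relies on independence of the $\gamma_{k,q}$'s across $k$ together with Lemma \ref{isofourierGamma} applied to the diagonal operator $\operatorname{diag}(\norme{m_k}_2)$.
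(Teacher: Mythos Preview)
Your proof is correct and follows the same core strategy as the paper: reduce to the closed absolute convex hull of an elementary set, factorize $H^1=H^2\cdot H^2$, apply the $\gamma$--$\gamma'$ H\"older inequality, and use power $\gamma$-boundedness on each factor. The only genuine difference is cosmetic: you work on the Fourier-coefficient side and invoke Nehari's theorem to bound the block-diagonal Hankel operator, whereas the paper stays on the function side via the identity
\[
\langle (f\star uv)(T)x,x^*\rangle=\frac{1}{2\pi}\int_0^{2\pi} f(e^{is})\,\langle u(e^{-is}T)x,\;v(e^{-is}T)^*x^*\rangle\,ds
\]
and simply bounds $|f(e^{is})|\leq\normeinf{f}$ before applying $\gamma$-H\"older on $(0,2\pi)\times\N_N$ and then Fourier--Plancherel (Lemma \ref{isofourierGamma}). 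Under Plancherel these are the same computation; the paper's route has the modest advantage of not needing Nehari.

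One slip to fix: your three estimates combine to
\[
C_\gamma^2\bigl(\max_k\normeinf{f_k}\bigr)\bigl(\max_k\norme{m_k}_2\bigr)\bigl(\max_k\norme{g_k}_2\bigr),
\]
and the hypothesis $\normeinf{f_k}\norme{h_k}_1\leq 1$ does \emph{not} control this product of maxima. You must first normalize, e.g.\ replace $f_k$ by $f_k/\normeinf{f_k}$ and $h_k$ by $\normeinf{f_k}h_k$, so that $\normeinf{f_k}=1$ and $\norme{g_k}_2=\norme{m_k}_2\leq 1$ for every $k$; the paper does the equivalent step by dividing through by $\normeinf{f_k}\norme{u_k}_2\norme{v_k}_2$ at the outset.
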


\begin{proof}
We adapt an argument from \cite[Proposition 4.16.]{pis1}. First we show that the set 
\[
\Gamma = \{ f\star (uv)(T): f\in C(\T), u,v \text { are polynomials}, \normeinf{f}\norme{u}_2\norme{v}_2 \leq 1  \}
\]	
is $\gamma$-bounded. Let $N \in \N$, $f_1, \ldots, f_N \in C(\T)$ and let $u_1,\ldots, u_N$, $v_1,\ldots, v_N$ be polynomials with $\normeinf{f_k}\norme{u_k}_2\norme{v_k}_2 \leq 1 $ for each $k \in \N_N$, and let $x_1,\ldots,x_N \in X$, $x^*_1,\ldots,x^*_N \in X^*$. One has, for each $k\in \N_N$
\[
\langle \big(f_k\star (u_k v_k)\big) (T)x_k, x^*_k \rangle  = \frac{1}{2\pi}\int_{0}^{2\pi} f_k(e^{is})\langle u_k(e^{-is}T)x_k,v_k(e^{-is}T)^*x^*_k \rangle ds .
\]
It follows, using Theorem \ref{inetrace}, that 
\begin{align*}
\sum_{k=1}^{N}|\langle \big(f_k\star &(u_k v_k)\big) (T)x_k, x^*_k \rangle| \leq \frac{1}{2\pi}\sum_{k=1}^{N} \int_{0}^{2\pi} \frac{|f_k(e^{is})|}{\normeinf{f_k}\norme{u_k}_2\norme{v_k}_2}|\langle u_k(e^{-is}T)x_k,v_k(e^{-is}T)^*x^*_k \rangle | ds  \\
& \leq \sum_{k=1}^{N} \int_{0}^{2\pi} \big|\big\langle \frac{u_k(e^{-is}T)x_k}{\norme{u_k}_2},\frac{v_k(e^{-is}T)^*x_k^*}{\norme{v_k}_2} \big\rangle \big| ds \\
& = \norme{(s,k) \mapsto \langle \frac{u_k(e^{-is}T)x_k}{\norme{u_k}_2},\frac{v_k(e^{-is}T)^*x_k^*}{\norme{v_k}_2} \big\rangle }_{L^1((0,2\pi)\times \N_N)} \\
&\leq \frac{1}{2\pi}\norme{ (s,k)\mapsto \frac{u_k(e^{-is}T)x_k}{\norme{u_k}_2}}_{\gamma((0,2\pi)\times \N_N;X)}\norme{ (s,k)\mapsto\frac{ v_k(e^{-is}T)^*x^*_k}{\norme{v_k}_2}}_{\gamma'((0,2\pi)\times \N_N;X^*)}.
\end{align*}
By Lemma \ref{isofourierGamma} and the $\gamma$-boundedness of $\{T^n:n\in \N \}$ one obtains
\begin{align*}
\norme{ (s,k)\mapsto \frac{u_k(e^{-is}T)x_k}{\norme{u_k}_2}}_{\gamma((0,2\pi)\times \N_N;X)} &=  \norme{(n,k)\mapsto c_n(u_k)T^n\Big(\frac{x_k}{\norme{u_k}_2}\Big)}_{\gamma(\N\cup\{0\}\times \N_N;X)} \\
&= \norme{\sum_{n=0}^{\infty}\sum_{k=1}^N \gamma_{n,k}\otimes T^n\Big(\frac{c_n(u_k)x_k}{\norme{u_k}_2}\Big)}_{G(X)} \\
& \leq C_{\gamma}\norme{\sum_{n=0}^{\infty}\sum_{k=1}^N \gamma_{n,k}\otimes \frac{c_n(u_k)x_k}{\norme{u_k}_2}}_{G(X)}.
\end{align*}
For each $k \in \N_N$, $\displaystyle\frac{\norme{(c_n(u_k))_{n\in \N\cup\{0\}}}_{l^2}}{\norme{u_k}_2} = 1$, therefore
\[
\norme{\sum_{n=0}^{\infty}\sum_{k=1}^N \gamma_{n,k}\otimes \frac{c_n(u_k)x_k}{\norme{u_k}_2}}_{G(X)} \leq \norme{(x_k)_{k\in \N_N}}_{\gamma(\N_N;X)}.  
\]	
Then arguing as in the proof of Theorem \ref{shifengRdisthm},
\[
\norme{ (s,k)\mapsto\frac{v_k(e^{-is}T)^*x^*_k}{\norme{v_k}_2}}_{\gamma((0,2\pi)\times \N_N;X^*)} \leq  C_{\gamma}\norme{(x^*_k)_{k\in \N_N}}_{\gamma'(\N_N;X^* )}.
\]
This shows that $\Gamma$ is $\gamma$-bounded.

Now if $P = \sum_{k\in \N }f_k \star h_k$ is a polynomial with $f_k\in C(\T)$, $h_k \in H^1$ and
 $\sum_{k\in \N }\normeinf{f_k}\norme{h_k}_1 \leq 1 $ then using the facts that for each $k\in \N$,
  $h_k$ can be written as a product $h_k = u_kv_k$ with $u_k,v_k \in H^2$ and $\norme{h_k}_1 = \norme{u_k}_2\norme{v_k}_2$ 
  and that the set of polynomials is dense in $H^2$ one obtains that $P \in \overline{absconv (\Gamma)}^{\norme{\cdot}}$.
   Finally one has the following inclusion
	\[	
\{P(T): P \text{ is a polynomial with } \norme{P}_{\A}\leq 1 \} \subset \overline{absconv (\Gamma)}^{\norme{\cdot}},
\]
and the latter set is $\gamma$-bounded thanks to Proposition \ref{propusalgamma}  whence the desired result.
\end{proof}	
\begin{rq1}
Since the set 	$\{P(T): P \text{ is a polynomial with } \norme{P}_{\A}\leq 1 \}$ is $\gamma$-bounded it is uniformly bounded. Therefore, since polynomials are dense in $\A(\D)$, the homomorphism 
\[
u : P \mapsto P(T)
\]
extends to a bounded algebra homomorphism from $\A(\D)$ into $B(X)$. Theorem \ref{pispelgamma} implies  that this homomorphism is $\gamma$-bounded, that is $\{u(f): f \in \A(\D): \norme{f }_{\A} \leq 1 \}$ is $\gamma$-bounded. 
\end{rq1}

\bibliographystyle{plain}
\bibliography{article}

\end{document}